\documentclass{amsart}
\usepackage[letterpaper,margin=1in]{geometry}
\usepackage{janeshi}

\theoremstyle{plain}
\newtheorem{theorem}{Theorem}[section]
\newtheorem{lemma}[theorem]{Lemma}
\newtheorem{proposition}[theorem]{Proposition}

\theoremstyle{definition}
\newtheorem{definition}[theorem]{Definition}

\theoremstyle{remark}
\newtheorem{remark}[theorem]{Remark}

\algtext*{EndIf}
\algtext*{EndFor}
\algtext*{EndWhile}
\algtext*{EndFunction}
\algtext*{EndProcedure}
\algtext*{EndLoop}
\algtext*{EndRepeat}
\algtext*{EndRepeatTimes}
\algtext*{EndRepeatUntil}

\begin{document}
\title{Lifting $L$-polynomials of genus $3$ curves}
\author{Jia Shi}
\maketitle
\vspace{-2em}
\begin{abstract}
Let $C$ be a smooth plane quartic curve over $\Q$. Costa, Harvey and Sutherland provide an algorithm with an implementation, improving Harvey's average polynomial-time algorithm, to compute the $\bmod \ p$ reduction of the numerator of the zeta function of $C$ at all $p\leq B$, where $p$ is an odd prime of good reduction, in $O(B\log^{3+o(1)} N)$ time, which is $O(\log^{4+o(1)}p)$ time on average per prime. Alternatively, their algorithm can do this for a single prime $p$ of good reduction in $O(p^{1/2}\log^2p)$ time. While this algorithm can be used to compute the full zeta function, no implementation of this step currently exists. 

In this article, we provide an algorithm and an implementation for the group operation on the Jacobian of $C$ over $\F_p$, where $p$ is an odd prime of good reduction. We provide a Las Vegas algorithm that takes the $\bmod \ p$ result of Costa, Harvey and Sutherland's algorithm and uses it to compute the full zeta function.  The expected running time of the algorithm is bounded by $O(p^{1/2+o(1)})$, and under heuristic assumptions, we prove an $O(p^{1/4+o(1)})$ bound on its average running time (over all inputs). Our lifting algorithm can also be applied to hyperelliptic curves of genus 3.
\end{abstract}

\section{Introduction}
Let $C$ be a genus $3$ curve over $\Q$.
The $L$-function of $C$ is defined by an Euler product 
\[L(C,s) = \prod_{p} L_p(p^{-s})^{-1} = \sum_{n\geq 1} a_n n^{-s},\]
where $L_p$ is an integer polynomial of degree at most $6$.
The $L$-function $L(C,s)$ appears in many open conjectures in arithmetic geometry, including the paramodular conjecture and generalizations of the
Sato--Tate conjecture, the Birch and Swinnerton-Dyer conjecture, and the Riemann hypothesis.
To study these conjectures, we wish to compute $L(C,s)$ explicitly, which typically involves computing the polynomials $L_p(T)$ for all 
primes $p$ up to a bound $B$, which under standard conjectures can be taken in $O(\sqrt{N})$, where $N$
is the conductor of the Jacobian of $C$.

If $p$ is a prime of good reduction for $C$, then $L_p(T)$ is the numerator of the zeta function 
\[
Z_{C_p}(T):= \exp\left(\sum_{n\ge 1} \frac{\# C(\F_{p^n})}{n}T^n\right) = \frac{L_p(T)}{(1-T)(1-pT)},
\]
of the reduction $C_p$ of $C$ modulo $p$.  While the definition of $Z_{C_p}(T)$ involves $\#C(\F_{p^n})$ for all $n\ge 1$, it depends only on $\#C(\F_{p^n})$ for $n\le g$. For $g=3$, it can be computed by counting points on $C_p$ over $\F_p$, $\F_{p^2}$, and $\F_{p^3}$. However, in this paper we are interested in methods that are more efficient than point counting. 
When $p$ is a prime of bad reduction, computing $L_p(T)$ is more difficult. But if one is willing to assume that $L(C,s)$ satisfies the expected functional equation, 
as implied by the Hasse-Weil conjecture, it is possible to deduce $L_p(T)$ using the knowledge of 
$L$-polynomials from sufficiently many primes of good reduction, which reduces the problem of computing $L(C,s)$ to the problem of computing $L_p(T)$ for sufficiently many good primes~$p$.

Harvey's general purpose average polynomial-time algorithm \cite{Har15} for arithmetic schemes  provides an asymptotically efficient ``in principle'' algorithm to compute $L_p(T)$ for all primes $p\leq B$ of good reduction for $C$ in $O(B\log^{3+o(1)} B)$ time, 
which is $O(\log^{4+o(1)}p)$ time on average per prime. This is considerably faster than any algorithm 
known for computing $L_p(T)$ for a given prime, particularly in the generic case when $C$ is a smooth plane quartic, where existing methods such as Tuitman's algorithm \cite{Tui14, Tui16} and Costa's algorithm \cite{Cos15, CHK19} have complexities that are quasilinear in $p$.  Harvey's general purpose method also provides an $O(p^{1/2+o(1)})$ time algorithm to compute $L_p(T)$ for a particular prime $p$.

While Harvey's method is very efficient asymptotically, a general purpose practical implementation remains an open challenge. 
 In the case of genus 3 curves, however, there are very efficient implementations of the first step of Harvey's algorithm, 
 which computes $L_p(T)$ modulo $p$, including Harvey and Sutherland's implementation for hyperelliptic curves \cite{HS14,HS16,Sut20},
  Harvey, Massierer, and Sutherland's implementation for geometrically hyperelliptic curves \cite{HMS16}, and most recently, Costa, Harvey and Sutherland's implementation for smooth plane quartics \cite{CHS23}.

The goal of this paper is to provide an efficient alternative to the second step of Harvey's algorithm, lifting $L_p(T)\bmod p$ to $L_p\in \Z[T]$, using group operations in the Jacobian of a genus 3 curve. Assuming we can perform group operations and hash group elements on the 
Jacobian of $C_p$ over $\F_p$ and $\F_{p^2}$, we present a probabilistic algorithm (of Las Vegas type) that takes $C_p$ and $L_p(T) \bmod p$  
as inputs and outputs $L_p(T)$ in ${O}(p^{1/4+o(1)})$ expected time on average over all inputs (under reasonable heuristic assumptions), and ${O}(p^{1/2+o(1)})$ time for worst case inputs (unconditionally).
When combined with the implementation in \cite{CHS23}, this provides an algorithm to compute $L_p(T)$ for all primes of
good reduction $p\leq B$ in $O(B^{5/4+o(1)})$ expected time, under our heuristic assumptions, and $O(B^{3/2+o(1)})$ expected time unconditionally.  While this is asymptotically slower than Harvey's $O(B^{1+o(1)})$ algorithm, it is asymptotically faster than existing implementations for smooth plane quartics (all of which have complexity $O(B^{2+o(1)})$), and also faster in practice, as can be seen in Table~\ref{Table-exp1}. We also provide an algorithm that computes $L_p(T)$ for a particular prime $p$ in $O(p^{1/2 + o(1)})$ expected time, matching the best known asymptotic result (Harvey's algorithm) and practically faster than any existing implementation (see Table~\ref{Table-exp2}).

Over a finite field, a genus 3 curve is either hyperelliptic or a smooth plane quartic.  For hyperelliptic curves, a general purpose implementation of Cantor's algorithm can be found in Magma and other computer algebra systems; see \cite{Sut19} for a particularly efficient implementation in the genus 3 case. 
For smooth plane quartics, Flon, Oyono, and Ritzenthaler \cite{FOR04} provide an efficient implementation of the group operation for curves with a rational flex when computing the sum of two typical divisors. However, not all smooth plane quartics over a finite field have a rational flex, and their algorithm does not cover the cases 
when one of the addends cannot be represented using a standard Mumford representation. Weinfeld \cite{Wei21} gives a complete algorithm for the group operation on $\Jac(C_p)(\F_{p^k})$ for any $k\geq 1$, but does not provide an implementation.

In this paper, we provide an efficient algorithm and implementation for performing the group operation 
on the Jacobian of a smooth plane quartic curve $C_p$ over $\F_{p^k}$ for any $k\geq 1$. We also 
provide a method to uniquely identify group elements, which is crucial to the efficiency of our algorithm for lifting $L_p(T)$ (which relies on the baby-step giant-step method). To our knowledge, this is the first fully general implementation of the group operation on the Jacobian of
a smooth plane quartic with this property. 
Our algorithm is standalone, but can be sped up significantly when combined with the algorithm in \cite{FOR04} via a hybrid implementation which is available on \cite{Shi26}, along with our algorithm for lifting $L_p(T)\bmod p$ for genus 3 curves.

The rest of this paper is structured as follows. We provide an
 algorithm to perform the group operation on the full Jacobian in Section~\ref{section-group-naive},
 followed by a sped-up version (using \cite{FOR04}) in Section~\ref{section-group-hybrid} and its complexity analysis in Section~\ref{section-analysis}.
 Then, we explain a three-step algorithm to lift $L_p(T)\bmod p$ to $L_p(T)$ using this group operation in Section~\ref{section-lifting} and Section~\ref{section-lv}. We present benchmark results and timings in Section~\ref{section-implementation}.

\section*{Acknowledgments}
I am very grateful to my advisor, Andrew V. Sutherland, for suggesting the problem and for his guidance throughout this project.
I am also very grateful to Raymond van Bommel for many helpful discussions.
I also acknowledge the support of the Natural Sciences and Engineering
Research Council of Canada (NSERC).

\section{Naive group operation on Jacobians of smooth plane quartics}\label{section-group-naive}

In this section, we provide library functions to perform Jacobian arithmetic 
on genus $3$ curves that arise as smooth plane quartics over finite fields.
Let $C$ be a genus $3$ smooth plane quartic over $\F_q$, where $q=p^e$ is an odd prime power with $p>127$.
Our contributions are: \footnotemark
\begin{itemize}
    \item A ``naive algorithm'' capable of computing the sum of any two divisors in $\Jac(C)(\F_q)$, as well 
    as negating and hashing.
    \item A fast ``typical algorithm'' that computes the sum of two ``typical divisors'' (defined later). This 
    algorithm is described in \cite{FOR04}. See Section~\ref{section-group-hybrid}.
    \item A hybrid algorithm that uses the typical algorithm whenever possible and reverts to 
    the naive algorithm otherwise. See Section~\ref{section-group-hybrid}.
\end{itemize}
\footnotetext{Implementations of the algorithms are available on \cite{Shi26}.}
We provide two different implementations for the group operation on $\Jac(C)(\F_q)$.
Although typical addition is extremely fast, 
it does not cover all cases (for example, the identity element is not typical). In cases that are not covered,
we switch to the naive algorithm. When lifting $L$-polynomials, 
while most group operations are typical, naive cases are usually needed when $q<2^{14}$ (see Subsection~\ref{subsection-naive-vs-typical}). A hybrid implementation supports fully general group operations while utilizing the speedup whenever possible.

\subsection{Naive algorithm - preprocessing} \label{subsection-preprocess}

Let the smooth plane quartic $C$ be given by a homogeneous polynomial $g \in \F_p[x,y,z]$ of degree $4$. 
We first attempt to find a tangent line $l_g(x,y,z)$ whose intersection points with $C$ are all rational.

By \cite{OR10}, for $q\geq 66^2+1$, we are guaranteed to find such a tangent line. 
We ran an experiment that verified that for all 82,240 smooth plane quartics defined over $\Q[x,y,z]$ in Sutherland's database \cite{Sut19} and all 564 choices of $p \in [127, 66^2+1]$ (excluding primes of bad reduction), all but 11 curve-prime pairs have this property. 
Once such a tangent line $l_g$ is found, 
we translate $g$ and $l_g$ linearly to the standard representation $f(x,y,z)$ and $l(x,y,z)=z$, where 
the tangent point becomes $(0:1:0)$ and the other two intersection points become $(1:0:0)$ and $(x_3:y_3:0)$
for some $x_3, y_3 \in \F_q$. In this case, we define $D^\infty = P_1^\infty + P_2^\infty + P_3^\infty$, where
\[L \cdot C = \underbrace{(0:1:0)}_{P_1^\infty} + \underbrace{(0:1:0)}_{P_2^\infty} + \underbrace{(x_3:y_3:0)}_{P_3^\infty} + \underbrace{(1:0:0)}_{P_4^\infty}\]
and $L$ is the tangent line.

This is guaranteed to work for $q \geq 66^2+1$ and often works for smaller $q$.
We perform this step only once per curve - 
when completed, this standard representation is used for all subsequent group operations on the Jacobian of this curve.

In the case that we do not find a tangent line with rational intersection points, by \cite{OR10},
when $q\geq 127$, we can still find a line with four rational intersection points $P_1^\infty, P_2^\infty, P_3^\infty, P_4^\infty$.
We define $D^\infty = P_1^\infty + P_2^\infty + P_3^\infty$. The naive group 
operation algorithm still works, but the typical group operation, described  
in Section~\ref{section-group-hybrid}, no longer applies.

\subsection{Points on the Jacobian}

Each point on $\Jac(C)(\F_q)$ can be represented as the equivalence class of a divisor
\[D=P_1+P_2+P_3-D^\infty,\]
where the points $P_i$ are defined over $\F_{q^6}$, since they belong to Galois orbits of size 1, 2, or 3,
and all are defined in $\F_{q^6}$. As $D^\infty$ is fixed,
we represent $D$ by the multiset $\set{P_1,P_2,P_3}$.

We next describe the implementation of several key group operations: addition, scalar multiplication, identity, and negation. 
We also describe hashing and random point generation.

\subsection{Naive algorithm - addition and scalar multiplication}
Algorithm~\ref{alg:naiveAdd} implements point addition; its correctness follows from Proposition 1 in \cite{FOR04}.
With addition implemented, scalar multiplication is performed using the double-and-add method.

The most difficult task in the addition algorithm is to find a cubic curve $K$
such that $\set{P_1,\ldots P_9}\leq K\cdot C$
for nine assigned points. (The same applies for finding a quadratic curve $Q$
passing through five specified points with multiplicities). 
When the multiplicities of all points are less than three, this can be achieved via linear algebra.
Otherwise, we need to use ideals.

\begin{algorithm}%%\small
    \caption{Naive point addition}\label{alg:naiveAdd}
    \begin{algorithmic}
        \Function{Addition}{$f, D_1=D_1^+-D^\infty, D_2=D_2^+-D^\infty, P_1^\infty, P_2^\infty, P_4^\infty$}
        \State \textbf{if} {$D_i$ is the identity for some $i$} \Return $D_{3-i}$ 
        \State \Comment{Step 1 - cubic interpolation}
        \State {Find cubic $K$ where $K\cdot C \geq {D_1^++D_2^++P_1^\infty+P_2^\infty+P_4^\infty}=:{S_1}$. To do this,}
        \If{Multiplicities of all points in $S_1$ are at most $2$}
            \State $K\gets$\Call{Lin-Alg-interpolation}{$f, S_1$}, see Algorithm~\ref{alg:lin-alg-interpolation}
        \Else 
            \State $K\gets$\Call{Ideal-interpolation}{$f,S_1$}, see Algorithm~\ref{alg:ideal-interpolation}
        \EndIf
        \Comment{Step 2 - cubic intersection}
        \State Compute $D_3^+$ such that $K\cdot C = S_1+D_3^+$. In {Magma}, use \Call{IntersectionNumbers}{}  
        \State \Comment{Step 3 - quadratic interpolation}
        \State Find quadratic $Q$ where $Q\cdot C \geq {D_3^++P_1^\infty + P_2^\infty}=:{S_2}$. To do this,
        \If{Multiplicities of all points in $S_2$ are at most $2$}
            \State $Q\gets$\Call{Lin-Alg-interpolation}{$f, S_2$}, see Algorithm~\ref{alg:lin-alg-interpolation}
        \Else 
            \State $Q\gets$\Call{Ideal-interpolation}{$f,S_2$}, see Algorithm~\ref{alg:ideal-interpolation}
        \EndIf
        \Comment{Step 4 - quadratic intersection}
        \State Compute $D_4^+$ such that $Q\cdot C = S_2+D_4^+$. In {Magma}, use \Call{IntersectionNumbers}{}  
        \State \Return $D_4=D_4^+-D^\infty$

        \EndFunction

    \end{algorithmic}
\end{algorithm}

\begin{algorithm}%\small
    \caption{Linear algebra interpolation}\label{alg:lin-alg-interpolation}
    \begin{algorithmic}   
        \Function{Linear-algebra-interpolation}{$f, S$}
         \Comment{Step 1: Define monomial basis $M$}
        \If{$\abs{S}=9$}
            \State $M\gets [x^3, y^3, z^3, x^2y, xy^2, x^2z, z^2x, y^2z, z^2y, xyz]$
        \Else \ we have {$\abs{S}=5$}
            \State $M\gets [x^2,y^2,z^2,xy,xz,yz]$
        \EndIf
         \Comment{Step 2: Impose constraint on points}
        \State Set up a linear system where $\sum_{i=1}^{\abs{M}} \lambda_i M_i=0$
        \For{$P=(x,y,z)\in S$}
            \State Impose the condition  $\sum_{i=1}^{\abs{M}} \lambda_i M_i(x,y,z)=0$
        \EndFor
         \Comment{Step 3: Impose constraint on tangents}\footnotemark
        \For{$P=(x,y,z)\in S$ with multiplicity $2$}
            \State Let $Ax+By+C'z=0$ be its tangent line on $C$
            \State Impose the following conditions:
            \State $\sum_{i=1}^{\abs{M}} \lambda_i\left(\tfrac{\partial M_i}{\partial x}(x,y,z)B - \tfrac{\partial M_i}{\partial y}(x,y,z)A \right)=0$
            \State $\sum_{i=1}^{\abs{M}} \lambda_i\left(\tfrac{\partial M_i}{\partial y}(x,y,z)C' - \tfrac{\partial M_i}{\partial z}(x,y,z)B \right)=0$
            \State $\sum_{i=1}^{\abs{M}} \lambda_i\left(\tfrac{\partial M_i}{\partial z}(x,y,z)A - \tfrac{\partial M_i}{\partial x}(x,y,z)C' \right)=0$
        \EndFor

        \State Solve the system in $\lambda_i$
        \State \Return $\sum_{i=1}^{\abs{M}} \lambda_i M_i$
        \EndFunction
    \end{algorithmic}
\end{algorithm}

\footnotetext{
The constraints in Step 3 of Algorithm~\ref{alg:lin-alg-interpolation} ensure the following tangency condition at the point $P$:
$Ax+By+C'z=0, (\partial f/\partial x)x + (\partial f/\partial y)y + (\partial f/\partial z)z=0.$}

\begin{algorithm}%\small
    \caption{Ideal interpolation}\label{alg:ideal-interpolation}
    \begin{algorithmic}   
        \Function{Ideal-interpolation}{$f,S$}
        \State Apply a linear transformation $M$ such that all points in $S$ are on the affine patch $z\neq0$. 
        \State Let $g(x,y,z):=f(Mx,My,Mz)$. Translate the points in $S$ as well.
        \State Dehomogenize $g$ by setting $z=1$. Call this $g_z$.
        \State Let the distinct points in $S$ be $P_i=[x_i,y_i], 1\leq i\leq n$, and let $m_i$ be the multiplicity of $P_i$.
        \State Compute the $\F_q[X,Y]$ ideal $I =  \langle g_z\rangle\cap \bigcap_{i=1}^{n}\langle X-x_i, Y-y_i\rangle^{m_i}$. 
        \State Let $h_z\in I$ be an element of desired degree, which is $2$ if $\abs{S}=5$ and $3$ if $\abs{S}=9$.
        \State Homogenize the equation for $h_z$ with respect to $z$ to get $h$
        \State \Return $h(M^{-1}x, M^{-1}y, M^{-1}z)$
        \EndFunction
    \end{algorithmic}
\end{algorithm}
%Since this method involves ideal operations, it is about two times slower than the linear algebra interpolation method.\tr{verify this!}
%Therefore, we use the linear algebra method whenever possible.
Experimental results indicate that the ideal interpolation method is approximately twice as slow as the linear algebra approach.
Therefore, we use the linear algebra method whenever possible.

\subsection{Naive algorithm - identity and inverse}
The identity element is given by $ P_1^\infty+P_2^\infty+P_3^\infty - D^\infty$.
\iffalse
\begin{proof}
    Write $D_1=D_2=2P_1+P_3$. Our goal is to compute $D_1+D_2$.

    Following Algorithm~\ref{alg:naiveAdd}, 
    we first identify the cubic through the degree $9$ divisor $D_1+D_2+(2P_1+P_4)=6P_1+P_4+2P_3$,
    which is $z^3=0$.
    Computing the intersection divisor yields
    \[z^3\cdot f = 3(P_1+P_2+P_3+P_4).\]
    Therefore, the divisor corresponding to $D_3$ in Step 2 of Algorithm~\ref{alg:naiveAdd} is $2P_4+P_3$.

    The quadratic through the degree $5$ divisor $(2P_4+P_3)+(P_1+P_2)$ is $z^2=0$.
    Computing the intersection divisor yields
    \[z^2\cdot f = 2(P_1+P_2+P_3+P_4).\]
    Therefore, the divisor corresponding to $D_4$ in Step 4 of Algorithm~\ref{alg:naiveAdd} $P_1+P_2+P_3=D_1$.
\end{proof}
\fi
\begin{algorithm}%\small
    \caption{Divisor inversion}\label{alg:naive-inverse}
    \begin{algorithmic}   
        \Function{Divisor inversion}{$f,D=D^+-D^\infty$}
        \State Apply a linear transformation $M$ such that all points in $S$ are on the affine patch $z\neq0$. 
        \State Find the quadratic $Q$ such that $Q\cdot C \geq D^++2P_4^\infty =: S$. To do this,
        \If{Multiplicities of all points in $S$ are at most $2$}
            \State $Q\gets$\Call{Lin-Alg-interpolation}{$f, S$}
        \Else 
            \State $Q\gets$\Call{Ideal-interpolation}{$f,S$}
        \EndIf
        \State Compute $(D^{-1})^+$, where $Q\cdot C = S + (D^{-1})^+$. In Magma, use \Call{IntersectionNumbers}{}.
        \State Return  $D^{-1}=(D^{-1})^+-D^\infty$
        \EndFunction
    \end{algorithmic}
\end{algorithm}

\begin{lemma} Algorithm~\ref{alg:naive-inverse} correctly computes the negation of a divisor.
\end{lemma}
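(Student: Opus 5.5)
The proof will be a linear-equivalence computation on $C$ using the hyperplane class, plus a dimension count to show that the conic $Q$ exists. Throughout I take for granted that the interpolation subroutines (Algorithms~\ref{alg:lin-alg-interpolation} and~\ref{alg:ideal-interpolation}) correctly return a conic $Q$ with $Q\cdot C\ge S$ whenever one exists. The preliminary linear change of coordinates $M$ changes nothing intrinsic, since it is an automorphism of $\mathbb{P}^2$ carrying $C$ and all the points along with it.

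First I would check that $Q$ exists and that the output is a legitimate representative. Conics form a $6$-dimensional space over $\F_q$. The condition $Q\cdot C\ge S$, with $\deg S=5$, imposes $5$ linear conditions on the coefficients. Points of multiplicity $m$ contribute $m$ conditions, through the tangency or ideal constraints. These conditions are defined over $\F_q$ because $S$ is Galois-stable, so a nonzero $Q$ defined over $\F_q$ exists.

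Since $C$ is a smooth, hence irreducible, quartic, $Q$ does not contain $C$. By B\'ezout, $Q\cdot C$ is then an effective divisor of degree $8$. It follows that $(D^{-1})^+ := Q\cdot C - S$ is effective of degree $3$ and Galois-stable. So $(D^{-1})^+-D^\infty$ has the required form of a point of $\Jac(C)(\F_q)$. Any choice of $Q$ works if the solution space is larger than one-dimensional, because the argument below does not depend on which $Q$ is chosen.

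The key step is the linear-equivalence computation. Let $H$ denote the hyperplane (line) class on $C\subset\mathbb{P}^2$. The chosen line $L$ satisfies $L\cdot C = P_1^\infty+P_2^\infty+P_3^\infty+P_4^\infty = D^\infty+P_4^\infty$, so $H\sim D^\infty+P_4^\infty$. Any conic section is linearly equivalent to $2H$, since $Q/\ell^2$ is a rational function for any linear form $\ell$. Hence
\[
D^+ + 2P_4^\infty + (D^{-1})^+ = Q\cdot C \sim 2H \sim 2D^\infty + 2P_4^\infty .
\]
Cancelling $2P_4^\infty$ gives $(D^{-1})^+ \sim 2D^\infty - D^+$, that is,
\[
(D^{-1})^+ - D^\infty \sim -(D^+ - D^\infty).
\]
This is exactly the statement that the output is the negation of $D$.

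I expect the only delicate point to be the meaning of ``$Q\cdot C\ge S$'' when $S$ has points of higher multiplicity or points coinciding with $P_4^\infty$. The multiplicities must be intersection multiplicities on $C$, so that the equality $Q\cdot C=S+(D^{-1})^+$ holds as divisors. This is what the tangency conditions and the ideal $\langle g_z\rangle\cap\bigcap_i\langle X-x_i,Y-y_i\rangle^{m_i}$ encode, and it is the content of the interpolation routines' correctness. Granting that, the argument above is complete.
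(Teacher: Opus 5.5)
Your proof is correct and is essentially the paper's argument: both come down to $D^+ + (D^{-1})^+ + 2P_4^\infty = Q\cdot C \sim 2(L\cdot C) = 2D^\infty + 2P_4^\infty$, followed by cancelling $2P_4^\infty$. Your dimension count and B\'ezout check that $Q$ exists and that the residual is effective of degree $3$ are extra justification the paper omits, and your $Q/\ell^2$ step states precisely what the paper phrases loosely as ``$Q\cdot C$ is a principal divisor.''
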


\begin{proof}
    Consider: $D+D^{-1} = (D^+-D^\infty) + ((D^{-1})^+-D^\infty) =-2P_4^\infty - 2D^\infty =-2(P_4^\infty +P_1^\infty+P_2^\infty+P_3^\infty ) =l^\infty\cdot C = 0.$ The second equality follows because $D^++(D^{-1})^++2P_4^\infty=Q\cdot C$ is a principal divisor.
\end{proof}
\subsection{Naive algorithm - hash function}

To perform the baby-step giant-step algorithm 
in Section~\ref{section-lifting}, we need a hash function 
for the group $\Jac(C)$ that uniquely identifies group elements (a perfect hash). If the hash of $D=D^+-D^\infty$ is defined as the multiset of the 
three points in $D^+$, it is not unique; there exist situations 
where $P_1+P_2+P_3- D^\infty \sim P_1'+P_2'+P_3'- D^\infty$
but the multisets $\set{P_1,P_2,P_3}$ and $\set{P_1',P_2',P_3'}$
are different. 
The following proposition determines exactly when 
this representation is not unique, and it describes how to generate a hash in those situations.
\begin{proposition}\label{Proposition-hash}
    Write $D=D^+-D^\infty$, where $D^+=P_1+P_2+P_3$.
    Exactly one of the following holds:
    \begin{enumerate}
        \item $D$ can be represented uniquely in the form $D^+-D^\infty$. This happens when $P_1,P_2,P_3$
        are not collinear.
        In this case, we define $\Hash(D)$ to be the multiset $\set{P_1,P_2,P_3}$
        \item $D$ is linearly equivalent to a divisor of the form $(-P)-D^\infty$.  This happens when $P_1,P_2,P_3$ are 
        collinear on a line $\ell$ and $\ell\cdot C = P_1+P_2+P_3+P$.
        In this case, we define $\Hash(D)$ to be the set $\set{P}$
    \end{enumerate}
\end{proposition}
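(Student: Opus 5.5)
The plan is to reduce everything to Riemann--Roch on the canonically embedded curve. Since $C$ is a smooth plane quartic of genus $3$, the canonical divisor is the hyperplane class $H=\ell\cdot C$ for any line $\ell$. By Riemann--Roch, for an effective divisor $E$ of degree $3$ we have
\[\ell(E)=3-2+\ell(H-E)=1+\ell(H-E),\]
and $\ell(H-E)$ is the dimension of the space of linear forms vanishing on $E$, with multiplicities interpreted scheme-theoretically. Such a linear form cannot vanish identically on $C$, so a nonzero one cuts out a line $\ell_0$ with $\ell_0\cdot C\ge E$.

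Two lines cannot both contain a degree-$3$ subscheme of $C$, since a line meets another line in only one point. Hence $\ell(H-E)\in\{0,1\}$, with $\ell(H-E)=1$ exactly when $E$ is collinear, meaning $E\le \ell_0\cdot C$ for some line $\ell_0$. For repeated points, ``collinear'' should be read in this sense, with tangency or flex conditions; I would state this convention explicitly.

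\textbf{Case (1).} If $P_1,P_2,P_3$ are not collinear, then $\ell(D^+)=1$, so $|D^+|=\{D^+\}$. Any effective $E'=P_1'+P_2'+P_3'$ with $E'-D^\infty\sim D^+-D^\infty$ satisfies $E'\sim D^+$, hence $E'\in |D^+|$ and $E'=D^+$. The multiset is therefore determined by the class of $D$, which is exactly what is needed for $\Hash(D)$ to be well defined.

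\textbf{Case (2).} If $P_1,P_2,P_3$ are collinear on $\ell$ with $\ell\cdot C=D^++P$, then $D^+\sim H-P$, so $D\sim H-P-D^\infty$. This is what the notation ``$(-P)-D^\infty$'' means. The residual point $P$ is determined by the class: if $H-P\sim H-P'$, then $P\sim P'$, and $P=P'$ because $C$ has positive genus. Conversely, every effective $E'\sim D^+$ lies in the pencil $|H-P|$, which consists exactly of the residuals $\ell'\cdot C-P$ for lines $\ell'$ through $P$. So all representatives $E'$ of $D$ are collinear and share the same residual $P$. This makes $\Hash(D)=\{P\}$ well defined and independent of the chosen representative.

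\textbf{Exactly one.} Collinearity is a property of the class, not of the chosen representative: by the dimension count, it holds if and only if $\ell(D+D^\infty)=2$. So the two cases are mutually exclusive and exhaustive. The two kinds of hash cannot collide either, since one is a $3$-element multiset and the other a single point.

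The main obstacle is making ``collinear'' precise when points repeat or are non-rational, for example a point and its tangent line, or a flex. I would handle this uniformly by working with the subscheme $E$ and the vector space of linear forms vanishing on it, rather than with coordinates of individual points. I would also check that the residual $P$ is rational over the base field: it is the fourth intersection of a Galois-stable line with a Galois-stable $D^+$, so it is fixed by Galois.
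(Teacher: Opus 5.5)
Your proof is correct. It is the standard Riemann--Roch argument on the canonically embedded quartic: $\ell(D^+)=1+\ell(H-D^+)$, where $\ell(H-D^+)\in\{0,1\}$ according to whether $D^+$ lies on a line, and the residual point $P$ is determined by the class because distinct points are never linearly equivalent in positive genus. This is exactly what the paper outsources to \cite{Wei21} rather than proving in-line, and your scheme-theoretic reading of ``collinear'' for repeated points (e.g.\ $2P+Q$ is collinear only when $Q$ lies on the tangent at $P$, and $3P$ only when $P$ is a flex) is the convention the statement implicitly needs for its two cases to be exhaustive.
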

For a proof of this proposition, see \cite{Wei21}.
\iffalse
\begin{proof}
    Let $D^+= D+D^\infty$ be a divisor of degree $3$.

    Then by RR, $\ell(D^+)=\ell(D+D^\infty)=1+\ell(\kappa-D)\geq1$.

    So exists some element $f$ in the RR space ($\L(D^+)$). So exists some $f$ such that $\div(f)=D^+$.

    \underline{To show that we write uniquely $D\sim (-P)-D^\infty \iff \exists $collinear $D^+$
    such that $D\sim D^+-D^\infty$}:

    $\implies$ Let $D\sim (-P)-D^\infty$. Let $\ell$ be any 
    point through $p$, then we extrapolate $D^+$ so that $\ell\cdot C = P+D^+$.

    $\impliedby$ let $D\simeq D^+-D^\infty$ where $D^+$ is 
    collinear. Then extrapolate to get $P$. 
    If we have $(-P)-D^\infty = (-Q)-D^\infty$
    then $P-Q\sim 0$. So $P+f\cdot C = Q$. But look at 
    the RR space of $P$ has dimension $0$. This is a contradiction.

    \underline{To show that $D\sim D^+-D^\infty$ with $D^+$ non-collinear $\iff$ cannot 
    write it in collinear way}

    This follows from Ritzenthaler's paper: the way to write $D^+$ is unique.

\end{proof}
\fi

\subsection{Naive algorithm - point generation}

To generate a random point on $\Jac(C)(\F_p)$, we perform the following steps:
\begin{itemize}
    \item With probability $\frac{1}{2}$, we return $D=\set{P_1,P_1',P_2}$ where $P_1$ is a random point over $\F_{p^2}\setminus\F_p$ with 
    Galois conjugate $P_1'$ and $P_2$ is a random point over $\F_p$. 
    \item With probability $\frac{1}{3}$, we return $D=\set{P_1, P_1', P_1''}$
    where $P_1$ is a random point over $\F_{p^3}\setminus\F_{p^2}$ with 
    Galois conjugates $P_1'$ and $P_1''$.
    \item With probability $\frac{1}{6}$, we select three points over $\F_p$ and return $D=\set{P_1,P_2,P_3}$.
    
\end{itemize}
Asymptotically, this yields a random point on $\Jac(C)(\F_p)$.

\section{Using typical group addition to speed up Section~\ref{section-group-naive}} \label{section-group-hybrid}

For the purpose of lifting $L$-polynomials (see Section~\ref{section-lifting}), 
although it is possible to use only the naive group addition (just introduced in the previous section),
it would be very slow. See Subsection~\ref{subsection-naive-vs-typical} for speed comparison. 
 We now introduce typical 
addition (by Ritzenthaler et al. in \cite{FOR04}) in Subsection~\ref{Subsection-Ritz}, 
and explain how it can speed up the lifting process significantly
in Subsection~\ref{Subsection-hybrid}.

\subsection{Ritzenthaler's typical divisor addition algorithm}\label{Subsection-Ritz}

We assume that the preprocessing step is done for the smooth plane quartic (see Subsection \ref{subsection-preprocess}).
We define what it means for a divisor to be typical and atypical.
\begin{definition}
    Let $D=P_1+P_2+P_3-D^\infty$. We define $D$ to be typical if it satisfies all of the following:
    \begin{itemize}
        \item $P_1,P_2,P_3$ are not collinear 
        \item All points lie on the affine patch $z \neq 0$
        \item $P_1,P_2,P_3$ are pairwise distinct 
        \item The $x$-coordinates of $P_1,P_2,P_3$ are pairwise distinct.
    \end{itemize}
    Otherwise, we call the divisor atypical.
\end{definition}
When a divisor is typical, it is represented by the Mumford representation $(u(X), v(X))$,
where $u(X), v(X)\in\F_q[X]$ are polynomials of degrees $3,2$ respectively. If we write $P_i=(x_i:y_i:1)$ for $i\in\set{1,2,3}$,
then the roots of $u(X)$
are $x_1,x_2,x_3$ and  $v(x_i)=y_i$ for each $i$.

Ritzenthaler et al. \cite{FOR04} provide an algorithm to compute the sum of two typical divisors by working with their Mumford representations (See the Algorithm~1 pseudocode in \cite{FOR04}).
The main idea is the same as in Algorithm~\ref{alg:naiveAdd}, which is to first find a cubic curve through the points, 
compute the residual divisor, and repeat the process by finding a quadratic curve and computing its residual divisor.

Although \cite{FOR04} provides a Magma program for typical divisor addition, it is specialized for curves with a flex (a tangent line that intersects 
one point with multiplicity 3).  
Approximately $63\%$ of smooth plane quartics have a flex.
This limitation prevents the use of their Magma program for the general case.
 
To accommodate smooth plane quartics with a tangent (not necessarily a flex), we provide an independent implementation of Algorithm 1 of \cite{FOR04} in Magma (see \cite{Shi26}).
\iffalse
Note that for all $p\geq 66^2+1$, such a line is guaranteed to exist.
For all $p$ where $127\leq p < 66^2+1$, outside of a few exceptions (see Subsection~\ref{subsection-preprocess}), such a line exists for all smooth plane quartics in Sutherland's database \cite{Sut19}.
\fi 
During the execution of the algorithm, various errors may arise. 
For example, the linear equations in Step 1 sometimes have no solution, or we may wish 
to compute the inverse $t_1$ of $\omega_1$ modulo $u_1$ when $\omega_1$ and $u_1$ are not coprime. In other cases, 
the output $(u,v)$ may have the incorrect degree, or $\gcd(u,u')$ may not be 1. We catch these cases and raise an appropriate exception.
We then default to the naive algorithm,
which handles additions of any two divisors (see Section~\ref{Subsection-hybrid}). 

\subsection{Transition}\label{Subsection-hybrid}

Although typical addition is very fast and covers most cases in the context of lifting $L$-polynomials, 
it may fail in two situations: 
\begin{itemize}
    \item One or two of the addends are not typical divisors. For example, if one of them is the identity, or if 
    the points in the support are collinear, or not on the affine patch $z \neq 0$.
    \item Both of the addends are typical, but typical addition throws an error.
\end{itemize}
In these cases, we fall back to the naive addition algorithm and convert the result back to typical 
representation whenever possible. See Algorithm~\ref{alg:hybrid-add} for details.

\begin{remark}
    To ensure consistency, our implementation strictly enforces the representation format: 
    typical divisors are always stored in Mumford representation $(u(X), v(X))$, 
    while atypical divisors are stored as multisets of three points.
    \iffalse
    On the implementation side, we have different classes for typical and atypical divisors. We 
    strictly enforce this condition at their respective object constructors.
    We only call typical addition when both addends are written in Mumford representation, 
    and we only call naive addition when both addends are written in multiset representation.
    \fi
\end{remark}

\begin{algorithm}%\small
    \caption{Hybrid addition algorithm}\label{alg:hybrid-add}
    \begin{algorithmic}   
        \Function{Addition}{$f, D_1,D_2$}
        \If{$D_i$ is the identity} \Return $D_{3-i}$\EndIf
        \If{Both $D_1, D_2$ are typical, with $D_1=(u_1,v_1)$, $D_2=(u_2,v_2)$} 
        \State \textbf{Try} $D_3\gets$ \Call{TypicalAddition}{$f,D_1,D_2$}
        \State Given $D_3=(u_3,v_3)$, assert that $(\deg(u_3), \deg(v_3))=(3,2)$ and $\gcd(u_3,u_3')=1$
        \State \Return $D_3$ as a typical divisor represented by $(u_3,v_3)$
        \State \textbf{Catch} Error $e$.
        \EndIf
        \If{$D_1$ is typical} \Comment{attempted typical add fails, we use naive add}
        \State Represent $D_1$ as $\set{P_1,P_2,P_3}$, with $P_i=(x_i,y_i)$, where $x_i$ are roots of $u_1$, $y_i=v_1(x_i)$
        \EndIf
        \If{$D_2$ is typical}
        \State Represent $D_2$ as $\set{Q_1,Q_2,Q_3}$ with $Q_i=(x_i,y_i)$, where $x_i$ are roots of $u_2$, $y_i=v_2(x_i)$
        \EndIf

        \State $D_3=\set{R_1,R_2,R_3} \gets$ \Call{NaiveAddition}{$f,D_1,D_2$}.
        \If{$\set{R_1,R_2,R_3}$ satisfies the conditions for a typical divisor}
        \State Compute $(u_3,v_3)$ representing $D_3$
        \State\Return $D_3$ as a typical divisor $(u_3,v_3)$.
        \Else 
        \State\Return $D_3$ as an atypical divisor represented by $\set{R_1,R_2,R_3}$
        \EndIf

        \EndFunction
    \end{algorithmic}
\end{algorithm}

When $D$ is a typical divisor, it is uniquely represented by $D=(u_3,v_3)$, so its hash is given by $(u_3,v_3)$.
When $D$ is atypical, we use $\Hash(D)$ given by Proposition~\ref{Proposition-hash}.
This guarantees a canonical representation for each divisor, thus hashing is unambiguous.

\subsection{Naive additions vs typical additions} \label{subsection-naive-vs-typical}

We compare the speed of naive and typical addition, as well as their frequency. We only 
record cases where neither addend is the identity. 

\iffalse
The following table shows the speedup factor of the typical addition over the naive addition. For each 
bit size $n$, we generate $10$ random Jacobians over $\F_p$ where $p\in[2^{n-1},2^{n})$, perform $100{,}000$ random 
additions and take the average time per naive addition and the average time per typical addition. 
\bigskip

\begin{table}[!ht]
    \centering
    \begin{tabular}{|c|c|c|c|c|c|c|c|c|c|c|c|c|c|c|c|c|}
        \hline
        $n$ & 10 & 11 & 12 & 13 & 14 & 15 & 16 & 17 & 18 & 19 & 20 \\
        \hline
        Speedup & 52.1 & 67.9 & 79.0 & 85.0 & 97.8 & 200 & 86.5 & 94.6 & 107.5 & 112.6 & 131.3 \\
        \hline
    \end{tabular}
    \bigskip
    
    \caption{Average speedup factor of typical addition over naive addition for various bit sizes $n$.}
    \label{tab:naive_vs_typical}
\end{table}
\fi
Experimental results for $p \in [2^9, 2^{20}]$ show that typical addition 
is between 50 and 200 times faster than naive addition.

In the context of lifting $L$-polynomials, we observe that as $p$ increases, the ratio of naive addition 
to typical addition decreases: from roughly 1/10 for $p < 2^{10}$ to 1/100 for $p \in [2^{10}, 2^{13}]$, and further for larger $p$.

We ran an experiment 
that, for each $n$, lifts $1,000$ random instances 
of $L_p(T)\bmod p$ for various $n$-bit primes $p$, and records how many instances invoked naive addition at least once.
See Table~\ref{tab:naive-needs}. 
\begin{table}[h]
\centering\small
\setlength{\tabcolsep}{1.5pt}
\begin{tabular}{|l|c|c|c|c|c|c|c|c|c|c|c|c|c|c|c|c|c|c|c|c|c|}
\hline
$n$ & 10 & 11 & 12 & 13 & 14 & 15 & 16 & 17 & 18 & 19 & 20 & 21 & 22 & 23 & 24 & 25 & 26 & 27 & 28 & 29 & 30 \\ \hline
    & 995 & 941 & 822 & 616 & 432 & 300 & 171 & 114 & 62 & 46 & 26 & 12 & 8 & 9 & 2 & 2 & 1 & 2 & 1 & 0 & 4 \\ \hline
\end{tabular}
\bigskip
\caption{Number of instances of lifting $L_p(T)\bmod p$, $p\in[2^{n-1},2^{n})$, that required naive addition at least once, among 1000 random instances}
\label{tab:naive-needs}
\end{table}
The results demonstrate that while naive addition is rarely required, 
its implementation remains essential. We also found cases for $J(C)(\F_p)$ with a $25$-bit prime $p$ where 
there exist non-identity atypical points that are multiples of a typical point.
\iffalse
Algorithm is as follows:
\begin{itemize}
    \item Try to add $D_1, D_2$
    \item If both are typical, then wrap the typical addition in a try-catch block. If it returns a valid mumford rep, 
    then we use the constructor for typical point and return.
    \item Otherwise, we convert $D_1,D_2$ to $\set{P1,P2,P3}, \set{Q1,Q2,Q3}$ if either or both, are not naive.
    \item Then we call the Naive addition and get a set $\set{R1,R2,R3}$
    \item Now, we check whether the three points returned is typical or not. (i.e. satisfies the typical condition).
    If so, we convert it and we call typical constructor. Otherwise, we call naive constructor.
\end{itemize}
\fi

\iffalse
\tr{comment on how things are used for next section. Especially 
say that when we wish to perform operations in $J(C)(\F_{p^k})$
where $k>1$, we make sure our transformed equation stays in base field.}
\fi

\section{Complexity analysis for the group algorithms} \label{section-analysis}

We now analyze the complexity of the group operations. We compute bit complexity
and we use $n = \log q$ to denote the number of bits needed to represent an element of $\F_q$. Let $O(\mathsf{M}(n))$
denote the complexity for field multiplication, which by \cite{Har21}, is $O(n\log n)$. Throughout the 
analysis, we note that we are working with polynomials of degree up to $4$, a constant number of variables, 
and matrices of dimension at most $9 \times 9$.

An instance of typical addition in $J(C)(\F_{q})$ requires a fixed number of field operations, matrix multiplications, 
matrix inversions, resultant computations, and greatest common divisor computations for polynomials of degree up to $3$. The dominating operations are the matrix inversions and 
greatest common divisor computations, so the total time complexity is $O(\mathsf{M}(n)\log n)= O(n\log^2n)$.

Computing naive addition and negation is more expensive. The cubic and quadratic interpolation steps consist of 
either a linear-algebra step, which requires a fixed number of field operations and matrix inversions, or an 
ideal-based calculation, which requires a fixed number of field operations and matrix multiplications, as well as ideal 
intersections. Ideal intersection can be computed using Faug\`ere's F4 algorithm \cite{Fau99}. Computing the 
intersection divisor relies on resultants and root-finding, with the latter being the dominating factor.
 The resultant can be computed in a fixed number of field operations since 
the degrees are at most $4$, meanwhile root-finding is $O(\mathsf{M}(n^2))$ by the Cantor-Zassenhaus algorithm \cite{CZ81}. 
 Therefore, the total time complexity is $O(\mathsf{M}(n^2))= O(n^2\log n)$.

Converting from typical to atypical representation is dominated by root finding for a degree $3$ polynomial, 
which is $O(\mathsf{M}(n^2))$ via the Cantor-Zassenhaus algorithm \cite{CZ81}. Converting from atypical to typical representation can be done 
using Lagrange interpolation, which costs $O(\mathsf{M}(n\log n))$. 
%Therefore, the total time complexity for converting is $O(\mathsf{M}(n^2)) = O(n^2 \log n)$.

When the divisor is typical, or when it is atypical but the three points 
are not collinear, the hash is simply the representation of the points, which takes $O(1)$ time. Otherwise, 
we need to compute the fourth intersection point of $C$ and the line $\ell$ through the three points.
This can be done by computing the intersection divisor, which takes $O(\mathsf{M}(n^2)) = O(n^2\log n)$ time.

\section{Lifting $L$-polynomials of genus $3$ curves} \label{section-lifting}

Let $C/\F_p$ be a genus $3$ curve given by either a smooth plane quartic curve or a hyperelliptic curve.
In the average polynomial-time setting, this will be the reduction of a genus $3$ curve over $\Q$ 
with good reduction at $p$.
In this section, we provide an algorithm that takes $C$ and $L_p(T) \bmod p$ as inputs and 
outputs the integer polynomial $L_p(T)$ in  $O(p^{1/4+o(1)})$  time on average.
 
More precisely, the $L$-polynomial is of the form 
\[L_p(T) = p^3T^6 + p^2a_1T^5 + pa_2T^4 + a_3T^3 + a_2T^2 + a_1T + 1, \t{ where }a_1,a_2,a_3\in\Z.\]
and the algorithms in \cite{Sut20,CHS23} provide the values of $a_1\bmod p, a_2\bmod p$, and $a_3\bmod p$. Our algorithm 
takes these as inputs and outputs $a_1,a_2,a_3$.

Our algorithm relies on two key ingredients.
\begin{itemize}
    \item \textbf{Ingredient 1}: Let $\Jac(C)(\F_{p^k})$ denote the Jacobian of $C$ over $\F_{p^k}$. 
    Then, we can deduce the group order of $\Jac(C)(\F_{p^k})$ from the $L$-polynomial using the following result:
    \[\#\Jac(C)(\F_{p^k})=\prod_{\zeta^k=1} L_p(\zeta).\]
    A proof of this fact 
    can be found in Chapter 8 of \cite{Lor96}.
    Using this fact, we can also deduce the order of the trace zero variety:
    \[\#(\Jac(C)(\F_{p^k})/\Jac(C)(\F_p)) = \prod_{\zeta^k=1, \zeta\neq 1} L_p(\zeta).\]

    \item \textbf{Ingredient 2}: We assume access to the following functionality:
    
        \begin{itemize}
            \item Group functionalities on $J(C)(\F_p)$ and $J(C)(\F_{p^2})$. These include addition, negation, the identity element, 
            and random element generation.
            \item Perfect hash function for group elements of $J(C)(\F_p)$ and $J(C)(\F_{p^2})$.
            \item A function that checks whether an element in $J(C)(\F_{p^2})$ lies in the image of $\iota:J(C)(\F_p)\hookrightarrow J(C)(\F_{p^2})$. We need this functionality because when $C$ is a smooth 
            plane quartic, we do not have a canonical representative  
            for elements in the trace zero variety $J(C)(\F_{p^2})/J(C)(\F_p)$.  
            This function allows us to verify whether an element in $J(C)(\F_{p^2})$  represents the identity element in the quotient.
            \footnote{In our implementation, 
            if $D\in\Jac(C)(\F_{p^2})$ is represented by a typical divisor $(u, v)$, we check whether their coefficients lie in $\F_p$. If it is represented atypically, we check 
            whether all points in the support of $D$ lie in $\F_{p^2}$ or $\F_{p^3}$.}
        \end{itemize} 

        When $C$ is a smooth plane quartic, these functionalities are fully provided by Section~\ref{section-group-naive}
        and Section~\ref{section-group-hybrid} with a Magma implementation available at \cite{Shi26}.
        When $C$ is a hyperelliptic curve, we can use Cantor's algorithm \cite{Can87} (built into Magma); 
        for a more efficient implementation, see \cite{Sut19b}.
\end{itemize}

\subsection{Overview of the algorithm}

Using the ingredients above, we determine the coefficients of the $L$-polynomial, $(a_1, a_2, a_3)$, in three steps:
\begin{itemize}
    \item \textbf{Step 1:} Using Hasse-Weil bounds and bounds in \cite{KS08}, 
    we determine a finite list of candidate triples.
    \item \textbf{Step 2:} Using the fact that the group order must be a multiple of the group exponent,
    we use a baby-step giant-step algorithm to eliminate incompatible candidate via group operations on $J(C)(\F_p)$.
    When $p>1600$, with the exception of approximately $0.009\%$ of cases, this uniquely determines the $L$-polynomial. See Remark~\ref{experiment-step3examples}.
    \item \textbf{Step 3:} In the rare cases where multiple candidates remain, 
    we eliminate all but one candidate by using a probabilistic algorithm. 
\end{itemize}

\subsection{Step 1}\label{section-step1}

The Hasse-Weil bounds imply:
\[-6\sqrt{p}\leq a_1\leq 6\sqrt{p}, -15p\leq a_2\leq 15 p, -20 p^{3/2}\leq a_3 \leq 20p^{3/2}.\]

If $p>144$, then $12\sqrt{p}>p$; hence the width of the interval $[-6\sqrt{p},6\sqrt{p}]$
is smaller than $p$ and yields a unique $a_1$. 
Throughout the rest of the paper, we assume $p>144$ and that $a_1$ is unique.
Furthermore, we have the following lemma:
\begin{lemma}
    There are at most 6 choices for $a_2$.
\end{lemma}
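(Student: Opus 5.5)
The plan is to replace the crude Hasse--Weil interval $[-15p,15p]$ for $a_2$, which contains about $31$ integers in a given residue class, by an interval that depends on the now-known $a_1$ and has length strictly less than $6p$. Such an interval contains at most $6$ integers congruent to the given value of $a_2 \bmod p$, which is the lemma.

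\emph{Setup.} Write the reciprocal roots of $L_p(T)$ as $\alpha_1,\bar\alpha_1,\alpha_2,\bar\alpha_2,\alpha_3,\bar\alpha_3$ with $|\alpha_i|=\sqrt p$. Put $t_i=\alpha_i+\bar\alpha_i\in\mathbb{R}$, so that $|t_i|\le 2\sqrt p$. Expanding
\[L_p(T)=\prod_{i=1}^3\bigl(1-t_iT+pT^2\bigr)\]
gives $a_1=-\sum t_i$ and $a_2=3p+\sum_{i<j}t_it_j$. Set $s=\sum t_i=-a_1$. Then
\[a_2=3p+\tfrac12\Bigl(s^2-\sum t_i^2\Bigr),\]
so bounding $a_2$ reduces to bounding $Q=\sum t_i^2$ over the box $[-2\sqrt p,2\sqrt p]^3$ intersected with the plane $\sum t_i=s$. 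These are the bounds of \cite{KS08}, and I would rederive them directly.

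\emph{Bounds on $Q$.} For the lower bound, Cauchy--Schwarz gives $Q\ge s^2/3$, hence $a_2\le 3p+s^2/3$. For the upper bound, $Q$ is convex, so its maximum on this polygon is attained at a vertex. By the symmetry $t\mapsto -t$ we may assume $s\ge 0$.
\begin{itemize}
\item For $0\le s\le 2\sqrt p$ the relevant vertex is $(2\sqrt p,-2\sqrt p,s)$. It gives $Q\le 8p+s^2$, so $a_2\ge -p$. The width of the interval for $a_2$ is then $4p+s^2/3\le 16p/3$.
\item For $2\sqrt p\le s\le 6\sqrt p$ the relevant vertex is $(2\sqrt p,2\sqrt p,s-4\sqrt p)$. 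It gives $Q\le 8p+(s-4\sqrt p)^2$, so $a_2\ge -9p+4\sqrt p\,s$. The width is $12p+s^2/3-4\sqrt p\,s$. This is decreasing on the range, since its derivative is $2s/3-4\sqrt p<0$, and it equals $16p/3$ at $s=2\sqrt p$.
\end{itemize}
In every case $a_2$ therefore lies in a closed interval of length at most $16p/3<6p$. A closed interval of length less than $6p$ contains at most $6$ integers in any fixed residue class mod $p$, which proves the lemma.

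\emph{Main obstacle.} The only step needing care is identifying the correct maximizing vertex of $Q$ in each range of $s$. A careless bound such as $Q\le 12p$ gives width $6p-s^2/6$, which reaches $6p$ at $s=0$ and would only give $7$ candidates. I would check the vertex claim by listing all vertices of the polygon: for $|s|\le 2\sqrt p$ these are the permutations of $(\pm2\sqrt p,\mp2\sqrt p,s)$, and for larger $|s|$ they are the permutations of $(2\sqrt p,2\sqrt p,s-4\sqrt p)$. Comparing $Q$ on these confirms the formulas above. As a sanity check, the resulting bounds should match those stated in \cite{KS08}.
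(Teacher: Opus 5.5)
Your proof is correct and follows the same route as the paper: confine $a_2$ to the $a_1$-dependent interval of \cite{KS08}, show that this interval has length less than $6p$, and count the integers in the known residue class mod $p$. The only differences are these. You rederive the bounds yourself, using Cauchy--Schwarz for the upper bound and maximizing the convex function $\sum t_i^2$ at a vertex of the slice of the cube for the lower bound, where the paper simply cites Proposition~4 of \cite{KS08}. Your case split also gives the sharper width $16p/3$, whereas the paper bounds the width $4p-\frac{1}{6}a_1^2+\frac{1}{2}p\delta^2$ crudely by $6p$ using $\delta\le 2$ and then rules out equality.
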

\begin{proof}
    Proposition 4 in \cite{KS08} determines bounds for $a_2$ given $p$ and $a_1$.
    Define $\delta$ to be the distance from $a_1/\sqrt{p}$ to its nearest integer equivalent to $0\bmod 4$.
    Then, the lower bound and the upper bound for $a_2$ are given by:
    \[B_{\t{low}} = -p+\frac{a_1^2-p\delta^2}{2} \t{ and } B_{\t{high}} =3p+\frac{a_1^2}{3}.\] 
    %We refer to this bound as the KS bound.

    The difference between the upper and lower bounds is the following:
    \begin{align*}
        &3p+\frac{a_1^2}{3} - (-p+\frac{a_1^2-p\delta^2}{2})
        =4p-\frac{1}{6}a_1^2+\frac12 p\delta^2
        \leq 4p+2p=6p,
    \end{align*}
    where the last inequality follows from $\delta\leq 2$.
    Equality is obtained $\iff$ 
    $\delta^2=4$ and $a_1=0$, which is impossible: $\delta=\pm2$
    $\iff a_1/\sqrt{p}$ is an integer $\equiv 2\bmod4$, contradicting that $a_1=0$.
    Therefore, the inequality is strict and there are at most 6 choices for $a_2$.
\end{proof}
Let $a_{3,\min}$ and $a_{3,\max}$ be the smallest and the biggest integer in $[-20p^{3/2},20p^{3/2}]$
with the correct residue modulo $p$, respectively. Then, the list of $a_3$ values is given by $[a_{3,\min}, a_{3,\min}+p, \ldots, a_{3,\max}]$.
This list is independent of the candidate $a_2$ values.

To summarize, the output of this step is a list of possible $(a_1,a_2,a_3)$ triples, where $a_1$ is unique,
$a_2$ can be chosen from at most 6 values, and $a_3$ can be chosen from at most $40\sqrt{p}$ values. 

\subsection{Step 2}

This step narrows down the list of possible $(a_1,a_2,a_3)$ triples from Step 1.
For a candidate triple $(a_1,a_2,a_3)$, let $L_{p,a_1,a_2,a_3}(T):= p^3T^6 + a_1p^2T^5 + a_2pT^4 + a_3T^3 + a_2T^2+a_1T+1$.

The main idea is to generate a random element $D$ in $J(C)(\F_p)$ and for each candidate triple 
$(a_1,a_2,a_3)$, using the Jacobian group operation, determine whether $L_{p,a_1,a_2,a_3}(1)$ (which is $\#J(C)(\F_p)$ if 
the candidate is correct) annihilates $D$. If not, $(a_1,a_2,a_3)$ must be a wrong guess and we eliminate it.

Once we generate a random element $D\in J(C)(\F_p)$, if 
we compute $L_{p,a_1,a_2,a_3}(1)\cdot D$ for each of the $O(\sqrt{p})$ candidate triples $(a_1,a_2,a_3)$ naively,
it would take $O(\sqrt{p}\log p)$ group operations. To speed this up, 
we use a baby-step giant-step algorithm (see the first outer loop of Algorithm~\ref{alg:BSGS}), which only takes $O(p^{1/4})$ group operations.

We remark that Algorithm~\ref{alg:BSGS} has two main outer loops. The first outer loop 
performs the baby-step giant-step algorithm until either only one candidate remains or 
until a certain loop condition is satisfied. Satisfying this loop condition guarantees a minimized runtime complexity.
 In practice,
in almost all cases, only one round of baby-step giant-step is needed. 

The second outer loop performs a similar test on the trace zero variety $J(C)(\F_{p^2})/J(C)(\F_p)$, 
until a specific loop condition is satisfied, namely that there are no candidate group orders $L_1, L_2$ 
such that $p^3=L_1$ and $p^2\mid\mid L_2$. See Lemma~\ref{lem:p3-edge}
for a proof that this condition is always eventually satisfied. Doing this check prevents us from needing $O(p)$ group operations in Step 3,
which otherwise always takes at most $O(p^{1/2})$ expected group operations. 

We emphasize that most of the time, only one round of the first outer loop is needed, 
and the specific loop conditions for both of the loops 
are written to minimize the average time complexity. See Subsection~\ref{Subsection-LV-worst-case} and 
Subsection~\ref{Subsection-LV-expected-time} for a more detailed analysis.

The following proposition shows us when distinct candidate triples yield distinct Jacobian group orders:
\begin{proposition}
    When $p\geq 1600$, distinct pairs $(a_2,a_3)$ yield distinct group orders.  
    Hence $\abs{J(C)(\F_p)}$ uniquely identifies the triple $(a_1,a_2,a_3)$.
\end{proposition}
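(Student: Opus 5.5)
We would compute the group order directly in terms of the candidate triple. Evaluating at $T=1$,
\[
\#J(C)(\F_p) = L_{p,a_1,a_2,a_3}(1) = p^3 + p^2a_1 + pa_2 + a_3 + a_2 + a_1 + 1 = (p^3+1) + a_1(p^2+1) + a_2(p+1) + a_3 .
\]
Since $a_1$ is already uniquely determined for $p>144$ (Step 1), it suffices to show that the map $(a_2,a_3)\mapsto a_2(p+1)+a_3$ is injective on the set of candidates produced in Step 1. Suppose two candidates $(a_2,a_3)\neq(a_2',a_3')$ give the same order. Then
\[
(a_2-a_2')(p+1) = a_3'-a_3 .
\]
All candidate $a_3$ values lie in a single residue class modulo $p$, so $p\mid a_3'-a_3$. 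Hence $p\mid (a_2-a_2')(p+1)$, and since $\gcd(p,p+1)=1$ we get $p\mid a_2-a_2'$.

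Next we use the range of $a_2$. By the preceding Lemma (via the bounds of \cite{KS08}), the admissible $a_2$ lie in an interval of length strictly less than $6p$, so $a_2-a_2' = kp$ with $|k|\le 5$. If $k=0$, then $a_3=a_3'$ and the pairs coincide, a contradiction. So $k\neq 0$, and then
\[
|a_3-a_3'| = |k|\,p(p+1)\ \ge\ p(p+1).
\]

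The main step is to rule this out using the bound on $a_3$. Hasse--Weil only gives $|a_3|\le 20p^{3/2}$, so $|a_3-a_3'|\le 40p^{3/2}$. Then $p(p+1) > 40p^{3/2}$ holds as soon as $\sqrt{p}\ge 40$, i.e.\ $p\ge 1600$, which is exactly the threshold in the statement. For such $p$ the displayed inequality is impossible, so distinct pairs $(a_2,a_3)$ give distinct orders.

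With the crude bound $|k|\ge1$, the sharper constraint $|k|\le 5$ is not even needed. It would only matter if one wanted a smaller threshold, which would require a finer bound on $a_3$ in terms of $a_1,a_2$; we would not pursue that.

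Finally, since $a_1$ is fixed, $\#J(C)(\F_p)$ determines $a_2(p+1)+a_3$, hence the pair $(a_2,a_3)$, and therefore the whole triple $(a_1,a_2,a_3)$.
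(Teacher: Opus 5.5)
Your proof is correct and is essentially the paper's argument. The paper observes that the $a_2$ candidates differ by multiples of $p$ and checks that the order intervals $B+ip(p+1)\pm 20p^{3/2}$ are disjoint exactly when $40p^{3/2}<p(p+1)$; you make the same comparison pairwise after deriving $p\mid a_2-a_2'$. You are also right that the bound $|k|\le 5$ is not needed.
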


\begin{proof}
    Let the list of $a_2$ candidates be $a_{2,1},a_{2,2},\ldots a_{2,k}$,
    where $a_{2,i+1}=a_{2,i}+p$ and $k\leq 6$.
    Recall that the size of the Jacobian is given by $\#J(C)(\F_p)=p^3+a_1(p^2+1)+a_2(p+1)+a_3$.

    Let $B:=p^3+a_1(p^2+1)+(a_{2,1}-p)(p+1)$.
    For each candidate $a_{2,i}$, $1\leq i\leq k$, 
    the possible values of $a_3$ yield an interval of possible group orders \[I_i := [B+ip(p+1)-20p^{3/2},B+ip(p+1)+20p^{3/2}].\]
    %The centers of these $k$ intervals differ by $p(p+1)$, while the length of each interval is $40p^{3/2}$. 
    For these intervals to be disjoint, one needs
        $B+ip(p+1)+20p^{3/2} < B+(i+1)p(p+1)-20p^{3/2}$,
    which is achieved when $40p^{1/2}<p+1$. This is true when $p>1600$.
\end{proof}

When $p<1600$, it is possible that distinct $(a_2,a_3)$ pairs yield the same group order. 
In this case, if running Algorithm~\ref{alg:BSGS} does not yield a unique triple $(a_1,a_2,a_3)$,
it still narrows down the candidates to a small list and
we can revert to a point-counting algorithm. For $p>1600$,
%, since distinct $(a_2,a_3)$ pairs yield distinct group orders,
it reduces to determining the group order of $J(C)(\F_p)$.

\begin{algorithm}%\small
    \caption{Step 2. Baby-step giant-step algorithm}\label{alg:BSGS}
    \begin{algorithmic}   
        \Function{Step 2}{}
        \State \textbf{Input:} $J(C)(\F_p)$ and $(a_1\bmod p,a_2\bmod p,a_3\bmod p)$
        \State $a_1$, $a_{2,i}, 1\leq i\leq 6$, and $a_{3,\min}$, $a_{3,\max}$ are obtained from Step 1
        \State $r\gets \lceil\sqrt{40}p^{1/4}\rceil$, $s\gets \lceil\sqrt{40}p^{1/4}\rceil$ 
        \Comment{baby and giant step sizes (divided by $p$)}
        \State CandidateTriples$\gets$ all possible $(a_1,a_2,a_3)$ triples
        \RepeatUntil{\#CandidateTriples $\leq 270$\footnotemark\textbf{ and for at least $\log p$ many iterations}}
        %\tr{add the condition where exists $i$ such that $\text{rad}(L_i)\mid L_j$ for all $j$}
        \State CandidateTriplesTmp$\gets [ \ ]$ \Comment{stores candidate triples in one iteration}
        \For{$a_2\in \{a_{2,1},a_{2,2},\ldots,a_{2,6}\}$}
        \State $D\gets$RandomDivisor($J(C)(\F_p)$)
        \State $L_{\text{fixed}}\gets (p^3+1)+a_1(p^2+1)+a_2(p+1)$
        \State $pD \gets p \cdot D$  \Comment{precomputed baby step}
        \State BabyStepsToIndex $\gets$ HashTable() \Comment{store baby steps}
        \For{$i\in \set{0,\ldots,r}$}
        \State Compute $ip\cdot D$ by adding the precomputed $pD$ at each step.
        \State BabyStepsToIndex[Hash($ip\cdot D$)] $\gets$ $ip$ 
        \EndFor
        \State \Comment{now work on giant steps}
        \State $L_{\min}\gets L_{\text{fixed}}+a_{3,\min}$,$rpD \gets rp \cdot  D$ and compute  $L_{\min} \cdot D$ 
        \For{$j\in\set{0,\ldots, s}$}
        \State Giant $\gets(L_{\min} + jrp)\cdot D$, computed by adding precomputed $rpD$ at each step.
         \If{Hash(Giant) in BabyStepsToIndex}
        \State $ip\gets$BabyStepsToIndex[Hash(Giant)] 
        \State CandidateOrder$\gets L_{\min}+jrp-ip$
        \State $a_3\gets$CandidateOrder - $L_{\text{fixed}}$
        \State CandidateTriplesTmp $\gets$ CandidateTriplesTmp $\cup$ $\{(a_1,a_2,a_3)\}$
        \EndIf
        \EndFor
        \EndFor
        \State CandidateTriples $\gets$CandidateTriples $\cap$ CandidateTriplesTmp
         \If{CandidateTriples has size 1} 
        \State \Return CandidateTriples[1]
        \EndIf
        \EndRepeatUntil 
        \Repeat{at least once}
        \State  and \textbf{until} there do not exist candidate group orders $L_1$, $L_2$ where $p^3=L_1$ and $p^2\mid L_2$
        \State $D\gets$ RandomDivisor($J(C)(\F_{p^2})$)
         \For{$(a_1,a_2,a_3)\in$ CandidateTriples}
        \State $L'\gets p^3-a_1(p^2+1)+a_2(p+1)-a_3$
        \If{$L'\cdot D\notin J(C)(\F_p)$}
        \State Remove $(a_1,a_2,a_3)$ from CandidateTriples
        \EndIf
        \EndFor
        \EndRepeat
        \State Proceed to Step 3
        \EndFunction
    \end{algorithmic}
\end{algorithm}
\footnotetext{See Lemma~\ref{lem:step2-candidates}}

The correctness of Algorithm~\ref{alg:BSGS} 
is as follows:

For a fixed choice of $a_2$, let $L_{\text{fixed}} = (p^3+1)+a_1(p^2+1)+a_2(p+1)$. 
Let $a_{3\min}$, $a_{3\max}$ be the minimum and maximum possible values of $a_3$, respectively.
Then, the interval $[\underbrace{L_{\text{fixed}}+a_{3\min}}_{L_{\min{}}}, \underbrace{L_{\text{fixed}}+a_{3\max}}_{L_{\max{}}}]$
incorporates all the possible group orders. Furthermore, we know that 
$L_{\min}$ is correct mod $p$, so we only search for every $p^{th}$ element.
%That is, finding the correct index $I$ such that $L_p(1)=L_{\min}+Ip$.

We set $r=s=\lceil\sqrt{40}p^{1/4}\rceil$. The baby steps are $ip$ where $0\leq i \leq r$. The giant steps are $(L_{min}+jrp)$ where $0\leq j \leq s$.
If we find a collision $(L_{\min} +jrp) \cdot D=ip \cdot D$,
then $(L_{\min}+jrp-ip)\cdot D =0$ and $L_{\min}+jrp-ip$ is a contender for the group size.
The corresponding $a_3$ is given by $L_{\min}+jrp-ip - L_{\text{fixed}}$. By the choice of $r$ and $s$, 
we are guaranteed to find the collision corresponding to the correct group order.
\begin{proposition}
    Each round of the first outer loop in Algorithm~\ref{alg:BSGS} uses about $ 2\sqrt{40}p^{1/4}\in O(p^{1/4})$ group operations. 
\end{proposition}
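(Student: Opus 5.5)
The plan is to count the group operations in one pass of the body of the first outer loop of Algorithm~\ref{alg:BSGS}, treating $p$ as large and the number of $a_2$ candidates as bounded by the lemma above (at most $6$). For a fixed $a_2$, split the work into three parts: the precomputations, the baby steps, and the giant steps.

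First, the precomputations. These are $pD$, $rpD$ and $L_{\min}\cdot D$. Each is a scalar multiple, computed by double-and-add as described for the naive algorithm. Since $p$, $rp$ and $L_{\min}$ are all $O(p^3)$, each costs $O(\log p)$ group operations, which is negligible against $p^{1/4}$. The remaining per-iteration costs are also lower order: generating a random divisor, hashing, and the hash-table lookups are not group operations, or cost at most a constant number of them.

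Second, the baby steps. The multiples $ip\cdot D$ for $0\le i\le r$ are obtained incrementally, adding the stored $pD$ each time, so they take $r$ additions. Third, the giant steps. The values $(L_{\min}+jrp)\cdot D$ for $0\le j\le s$ are obtained the same way by adding $rpD$, so they take $s$ additions. With $r=s=\lceil\sqrt{40}p^{1/4}\rceil$, this gives $r+s+O(\log p)\approx 2\sqrt{40}\,p^{1/4}$ group operations per value of $a_2$.

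I would also justify the choice of $r,s$ here. The number of $a_3$ candidates is at most $40\sqrt{p}$, and it must be covered by the $rs\ge 40\sqrt{p}$ differences $jr-i$. This explains why both $r$ and $s$ are of order $\sqrt{40}\,p^{1/4}$: the balanced choice minimizes $r+s$ subject to $rs\ge 40\sqrt p$. Summing over the at most $6$ choices of $a_2$ multiplies the count by a constant, so the total is still $O(p^{1/4})$. The phrase ``about $2\sqrt{40}p^{1/4}$'' is then read as the count per $a_2$, up to that constant factor and lower-order terms.

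The only delicate point is the bookkeeping of what ``about'' means. I would state explicitly that the $O(\log p)$ scalar multiplications and the constant factor from the loop over $a_2$ are absorbed into the $O(p^{1/4})$ bound. With that made precise, the rest is direct counting.
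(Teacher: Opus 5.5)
Your proposal is correct and follows the paper's own argument: $O(\log p)$ group operations for the scalar multiples $pD$, $rpD$, $L_{\min}\cdot D$, plus $r$ and $s$ additions for the baby-step and giant-step accumulation loops. Your observation that the inner loop over up to six values of $a_2$ multiplies this count by a constant is a fair refinement. It means $2\sqrt{40}p^{1/4}$ is really the cost per value of $a_2$; the paper's proof passes over this silently, and it does not affect the $O(p^{1/4})$ bound.
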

\begin{proof}
    To compute $p\cdot D$, $rp\cdot D$, $L_{min}\cdot D$, we only need $O(\log_2(p))$ group operations.
    The dominating steps are the two loops that accumulate the baby steps and giant steps.
    Each loop takes $r, s\sim \sqrt{40}p^{1/4}$ group operations.
\end{proof}
\begin{lemma}{\label{lem:p3-edge}}
    Suppose $J(C)(\F_p)$ is isomorphic to $\Z/p\Z\times\Z/p\Z\times\Z/p\Z$. Assume that $(a_1,a_2,a_3)$ is the correct candidate yielding $L_p(T) = L_{p, a_1, a_2, a_3}(T)$
    and $(a_1,a_2',a_3')$ is the wrong candidate, yielding $L_p'(T) = L_{p, a_1, a_2', a_3'}(T)$.
    Then it is not possible to have $p^2\mid L_p'(1)$
    and $\lambda'\mid L_p'(-1)$, where $\lambda'$ is the group exponent of the trace zero variety $J(C)(\F_{p^2})/J(C)(\F_p)$.
    This explains why the second loop in Algorithm~\ref{alg:BSGS}
    is expected to terminate in a constant number of iterations.
\end{lemma}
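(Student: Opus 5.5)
The plan is to turn the two hypotheses into arithmetic constraints on the wrong candidate and show they cannot both hold. First I use the hypothesis on the correct candidate. Since $J(C)(\F_p)\cong(\Z/p\Z)^3$, we have $L_p(1)=p^3$, so
\[a_1(p^2+1)+a_2(p+1)+a_3=0.\]
Moreover $J(C)(\F_p)$ contains $p^3$ points of order $p$, so $C$ has $p$-rank $3$, i.e.\ it is ordinary. Hence $a_3\not\equiv 0\pmod p$, equivalently exactly three Frobenius eigenvalues $\alpha_1,\alpha_2,\alpha_3$ are $p$-adic units. In addition, $J[p]$ is étale of order $p^3$ and is already fully rational over $\F_p$.

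Next I parametrize the wrong candidate. By Step 1, $a_2'=a_2+kp$ and $a_3'=a_3+mp$ with $|k|\le 5$, $|m|\le 40\sqrt p$, and $(k,m)\neq(0,0)$. Then
\[L_p'(1)=p^3+kp(p+1)+mp.\]
The condition $p^2\mid L_p'(1)$ forces $k+m\equiv 0\pmod p$. Since $|k+m|<p$ (using $p>1600$ as in the previous proposition), this gives $m=-k$, and then $k\neq 0$. Substituting into the value at $-1$ gives
\[L_p'(-1)=L_p(-1)+kp(p+1)+kp=L_p(-1)+kp(p+2).\]
Now $L_p(-1)=\#\bigl(J(C)(\F_{p^2})/J(C)(\F_p)\bigr)$, and its exponent $\lambda'$ divides $L_p(-1)$. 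So the hypothesis $\lambda'\mid L_p'(-1)$ becomes $\lambda'\mid kp(p+2)$ with $1\le|k|\le 5$.

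It remains to contradict $\lambda'\mid kp(p+2)$. I would split this into the $p$-part and the prime-to-$p$ part.

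For the $p$-part, $J[p]\subseteq J(C)(\F_p)$. I would show that the $p$-primary part of the quotient $J(C)(\F_{p^2})/J(C)(\F_p)$ has order $\prod_{i\le 3}(1+\alpha_i)$ up to units, computed in the unit-root (étale) part. From this I aim to bound its exponent by one factor of $p$, which is compatible with the single $p$ in $kp(p+2)$. So the $p$-part is not where the contradiction comes from.

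For the prime-to-$p$ part, the key input is that the trace zero variety is a module over $\Z[\pi]$ on which $\pi$ acts as an automorphism of order dividing $2$ (since $\pi^2=1$ on $J(C)(\F_{p^2})$). It is a quotient of $J(C)(\F_{p^2})$, which has at most $6$ generators. Together with the Weil bound $L_p(-1)\ge(\sqrt p-1)^6$, this gives a group of order about $p^3$ and rank at most $6$. I would then argue that its exponent cannot divide $k(p+2)$, whose size is $O(p)$: for an order near $p^3$ with exponent $O(p)$ one needs essentially rank $\ge 3$ at every contributing prime. I would try to exclude this using the $\ell$-adic structure, where the $\ell$-rank of the trace zero variety is bounded by the multiplicity of $-1$ as a root of $P(T)\bmod \ell$. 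I would also use the relation $1+a_1+a_2+a_3\equiv 0\pmod p$ to pin down $L_p(-1)\bmod p$.

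I expect this last step to be the main obstacle. Its first sub-step, bounding the exponent of a group of order $\approx p^3$ from below by something exceeding $5p(p+2)$, is not automatic from group order alone. The first thing I would try is to show $\gcd(L_p(-1),\,k(p+2))$ is small compared with $L_p(-1)$, by computing $L_p(-1)$ modulo small primes and modulo $p+2$ from the relation coming from $L_p(1)=p^3$.
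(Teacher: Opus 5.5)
There is a genuine gap, and it cannot be closed. Your reduction to $\lambda'\mid kp(p+2)$ with $1\le|k|\le 5$ matches the paper's. (There is one slip: $L_p(1)=p^3$ gives $a_1(p^2+1)+a_2(p+1)+a_3=-1$, not $0$.) But the proposal stops exactly where the contradiction has to be produced; the prime-to-$p$ paragraph is only a list of things to try. The paper closes this step by evaluating $L_p(-1)$ explicitly. It asserts that $L_p(1)=p^3$ forces $(a_1,a_2,a_3)=(0,0,-1)$, so $L_p(-1)=p^3+2$. After discarding the factor $p$, $\lambda'\mid p^3+2$ and $\lambda'\mid k(p+2)$ give $\lambda'\mid k(p^3+2)-kp(p+2)(p-2)=2k(2p+1)$, hence $\lambda'\mid 4k(p+2)-2k(2p+1)=6k$. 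This is absurd, since a group of order about $p^3$ and rank at most $6$ has exponent at least about $\sqrt p$.

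That evaluation does not follow from the single linear relation $L_p(1)=p^3$, and your own observation that $J[p]$ is entirely $\F_p$-rational contradicts it. Frobenius acts trivially on $J[p](\overline{\F}_p)=T_pJ/pT_pJ$, so the unit-root factor of $P(T)=T^6L_p(1/T)$ is $\equiv (T-1)^3$. Hence $P(T)\equiv T^3(T-1)^3\pmod p$, i.e.\ $a_1\equiv -3$, $a_2\equiv 3$, $a_3\equiv -1 \pmod p$. In particular $L_p(-1)\equiv 8\pmod p$, which settles your $p$-part at once. With the Hasse--Weil bounds and $L_p(1)=p^3$, this forces $(a_1,a_2,a_3)=(-3,3p+3,-6p-1)$ for $p>1600$, i.e.\ $L_p(T)=(1-T+pT^2)^3$ and $L_p(-1)=(p+2)^3$.

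Since Frobenius is semisimple, $\pi^2-\pi+p=0$ in $\mathrm{End}(J)$. For $x\in J(\F_{p^2})$ and $y=(\pi-1)x$ we get $\pi y=-y$, hence $(p+2)y=0$, so $\lambda'\mid p+2$. Now take $k\in\{-1,-2,-3,-4\}$; these $a_2'=a_2+kp$ lie in the Step 1 range $[-p,3p+3]$. Then $L_p'(1)=p^2(p+k)$ and $L_p'(-1)=(p+2)\bigl((p+2)^2+kp\bigr)$, so both $p^2\mid L_p'(1)$ and $\lambda'\mid L_p'(-1)$ hold.

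So the quantity $\gcd\bigl(L_p(-1),k(p+2)\bigr)$ you hoped to show small is at least $p+2$. The obstacle you flagged is not technical: for $p>1600$ every curve satisfying the hypothesis violates the conclusion. The paper's argument rests on an incorrect value of $(a_1,a_2,a_3)$.
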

\begin{proof}
    Suppose towards contradiction that 
    we have both $p^2\mid L_p'(1)$
    and $\lambda'\mid L_p'(-1)$.
    Let $\delta_2, \delta_3$ 
   be such that $a_2'=a_2+\delta_2p, a_3'=a_3+\delta_3p$.

   From $L_p(1)= p^3 + 1 + a_1(p^2+1) + a_2(p+1) + a_3 = p^3$, we get that $a_1=0, a_2=0, a_3=-1$. 
   Note that $p^2\mid L_p'(1)$ and $p^3\mid L_p(1)$ imply $p^2\mid L_p'(1)-L_p(1) = p(\delta_2(p+1)+\delta_3)$.
    So we must have $p\mid \delta_2(p+1)+\delta_3$, hence $p\mid \delta_2+\delta_3$. Since $\abs{\delta_2}\leq 5$, $\delta_2=-\delta_3$. Call this number $k$. We have $k\in\set{-5,-4,\ldots, 4, 5}.$

    Then $a_1'=0, a_2'=\delta_2p = kp, a_3' = -\delta_2p-1=-kp-1$ and 
    \[
        L_p(1) = p^3,
        L_p'(1)= p^2(p+k),
        L_p(-1)=p^3 + 2,
        L_p'(-1)=p^3+ kp^2+2kp +2
    \]
    The conditions that 
    $\lambda' \mid p^3+2$ and  $\lambda' \mid p^3+ kp^2+2kp +2$ imply $\lambda'\mid kp(p+2)$.
    Since the $p$-rank of $J(C)(\F_{p^2})$ is at most $3$, (see Page 147 in \cite{Mum08} or tag 03RP in \cite{Sta25} for a proof),
    and $J(C)(\F_{p})$ has $p-$rank $3$, there is no $p$-torsion in the trace zero variety
    hence $\lambda'\nmid p$. Therefore $\lambda'\mid k(p+2)$.
    Since $\lambda'\mid p^3+2$, we have 
    $\lambda' \mid k(p^3+2) - kp(p+2)(p-2) =2k(1+2p)$
    but $\lambda'\mid k(p+2) \implies  \lambda'\mid 4k(p+2)\implies \lambda'\mid 4k(p+2)-2k(1+2p)=6k$,
    which is a contradiction. So the procedure on the trace zero variety must terminate.
\end{proof}

\section{A Las Vegas algorithm for Step 3} \label{section-lv}

When Step 1 and Step 2 do not yield a unique candidate (it happens rarely, see Remark~\ref{experiment-step3examples}), we proceed to Step 3. 
We present a probabilistic algorithm (of Las Vegas type) 
that determines the unique candidate in Subsection~\ref{Subsection-LV-algo}, 
followed by a proof of correctness in Subsection~\ref{Subsection-LV-correctness}. 
Then, we discuss its time complexity (Subsection~\ref{Subsection-LV-worst-case} for the worst case and 
Subsection~\ref{Subsection-LV-expected-time} for the average case based on heuristic assumptions). We finally 
discuss examples where Step 3 is needed in Subsection~\ref{Subsection-step3examples}.

\subsection{The algorithm} \label{Subsection-LV-algo}

Algorithm~\ref{alg:sylow-eliminator} takes the remaining candidates from Step 2
 and proceeds iteratively.
For each iteration, it compares two different candidates $(a_1,a_{2,i}, a_{3,i})$ and  $(a_1,a_{2,j}, a_{3,j})$ (yielding $L_i(T)$ and $L_j(T)$ 
respectively). 
By calling the Sylow-verifier subroutine described in Remark~\ref{rem:sylow-verifier}, it 
attempts to show that $L_i(T)$ is correct by finding a subgroup of order $\ell^{b+1}$, where $\ell^{b+1}\mid L_i(1)$ and $\ell^b\parallel L_j(1)$.
If such a subgroup exists, 
it eliminates all candidate group orders not divisible by $\ell^{b+1}$, including $L_j(T)$. It repeats 
the comparison between every pair of candidate group orders until only one candidate remains. 
Algorithm~\ref{alg:sylow-eliminator} is not implemented due to the rare cases where 
Step 1 and Step 2 do not yield a unique solution. See Subsection~\ref{Subsection-step3examples} for examples of such cases.

\begin{remark} \label{rem:sylow-verifier}

    For a fixed prime $\ell$, the \textbf{Sylow-verifier} subroutine takes a set of group elements $\set{\alpha_1,\ldots\alpha_n}$ 
    (each having order a power of $\ell$)
    as inputs, and verifies whether they generate a subgroup of size at least $\ell^n$.
    This algorithm is obtained by modifying Algorithm 4 in \cite{Sut10} as follows. For Step 2, as soon as 
    $h_i=\log_\ell(\alpha_i)$ is at least $n$, or if 
    $\prod_{i:\alpha_i\in\mathbf{\alpha}} \ell^{h_i}\geq \ell^n$, 
    we return true. Otherwise, when every $h_i$ is $0$ but the product 
    of the orders of the elements in $\mathbf{\alpha}$ is not at least $\ell^n$, we return false.

    The complexity is bounded above by $O(\ell^{(r-1)/2})$, 
    where $r$ is the rank of the subgroup, and in our context, $r\leq 6$. 
    This is because if such a subgroup does not exist, 
    the order of the subgroup generated by $\set{\alpha_1,\ldots\alpha_n}$
    is strictly less than $\ell^n$, hence the complexity 
    does not exceed $O(\ell^{(r-1)/2})$ by Proposition 2 in \cite{Sut10}. On the other hand,
    if such a subgroup exists, we return as soon as it is found; hence 
    the complexity is $O(\ell^{(r-1)/2})$.
\end{remark}

Suppose that a size $\ell^n$ subgroup exists; then by Theorem 1.1 in \cite{Pak00}, 
since the rank is at most $6$, $8=6+2$ random elements 
generate such a subgroup with probability at least $\frac{1}{2}$.
Hence this algorithm is expected to prove the existence of 
such a subgroup in at most $2$ tries.

\begin{algorithm}%\small
    \caption{Sylow-subgroup eliminator}\label{alg:sylow-eliminator}
    \begin{algorithmic}   
        \Function{Sylow-eliminator}{}
        \State \textbf{input:} A list of candidate group orders $L_{i}=L_{p, a_1, a_{2,i},a_{3,i}}(1)$, $1\leq i\leq k$ and $J(C)(\F_p)$
        \State \textbf{output:} The correct candidate order $L_1$
        \State $E\gets \emptyset$ \Comment{Indices that are eliminated}
        \While{$\#E < k-1$}
         \For{$i\in \{1,\ldots,k\}\setminus E$} 
         \For{$j\in \{1,\ldots,k\}\setminus E$, $j\neq i$}
         \Comment{Compare $L_i(1)$ and $L_j(1)$}
        \State let $\ell$ and $b$ be such that $\ell^{b+1}\mid L_i(1)$ and $\ell^{b}\mid\mid L_j(1)$.
    \State \Comment{Attempt to show the existence of a subgroup of size $\ell^{b+1}$.}
    \State $S\gets\set{\alpha_1,\ldots,\alpha_{8}}$, $\alpha_i$ are random elements of the $\ell$-Sylow subgroup of $\Jac(C)(\F_p)$ 
    \State $Res\gets$\Call{Sylow-Verifier}{$\ell^{b+1}, S$}
     \If{$Res$}
            \State $E \gets E\cup\set{k : \ell^{b+1}\nmid L_k(1)}$
            \Comment{This eliminates $j$ and possibly more indices.}
    \EndIf
    \EndFor
    \EndFor
    \EndWhile
    \State \Return the only element in $\{1,\ldots,k\}\setminus E$
    \EndFunction
    \end{algorithmic}
\end{algorithm}

\subsection{Proof of Correctness} \label{Subsection-LV-correctness}

\iffalse
We prioritize elements in $S$. This is because if the correct  order is not in $S$, then there exists another prime $\ell$ such that $\ell\mid L(1)$
but $\ell\nmid L_i(1)$ for at least another $i$. This implies that in Step 2,
all the random points we drew in the \tr{repeated} times landed in the $0$ element in the $\Z/\ell\Z$ quotient.
This is very rare. 
\fi

Without loss of generality, we assume that $L_1(1)$ is the correct group order.
We will prove that Algorithm~\ref{alg:sylow-eliminator} terminates and returns $L_1.$
From the correctness of Step 2 (Algorithm~\ref{alg:BSGS}), $L_1(1)$ 
must be among the input list of candidate group orders. From the correctness of the
 Sylow-verifier subroutine (see Remark~\ref{rem:sylow-verifier}), we only eliminate $L_j(1)$ 
if there exists a subgroup of size $\ell^e$ but $\ell^e\nmid L_j(1)$. Therefore, 
$L_1$ will never be eliminated.

Now, let $L_j(1)$
be another group order. We show that it will eventually be eliminated. 
There must exist 
some prime power $\ell^n$ such that $\ell^n\mid L_1(1)$ but $\ell^n\nmid L_j(1)$, 
otherwise $L_j(1)$ would be an integer multiple of $L_1(1)$, which is impossible because 
both are in $p^3+O(p^{5/2})$. Therefore, we are expected 
to find such a subgroup in $O(1)$ rounds. Since this loop continues indefinitely 
until only one candidate remains, we expect to eventually find such a subgroup and eliminate $j$.

\subsection{Complexity analysis - expected time on the worst case input} \label{Subsection-LV-worst-case}

Let $\lambda$ be the group exponent of $J(C)(\F_p)$.
If $\lambda$ does not divide a candidate group order $L_j(1)$, then, as soon as 
we generate a point of order $\lambda$, $L_j(1)$ will not annihilate it and we can eliminate $L_j$.

Therefore, when $\lambda\nmid L_j(1)$, we can always expect to eliminate $L_j(1)$ quickly.
Hence, after several rounds, the candidates that remain after Step 2 are 
precisely the ones divisible by $\lambda$. Lemma~\ref{lem:step2-candidates} shows 
that, in the worst case, Step 2 eventually yields at most $270$ candidates.

\begin{remark}
    In our experiments, 
we have never encountered a case where more than $6$ candidates remain after Step 2.
\end{remark}
\iffalse
Following is if we decide to use the $6$:
Let $\lambda$, ${\lambda'}$ be the group exponents of $J(C)(\F_p)$ and the trace zero variety, respectively.
If $\lambda$ does not divide a candidate group order $L_j(1)$, then, as soon as 
we generate a point of order $\lambda$, $L_j(1)$ will not annihilate it and we can eliminate $L_j$.

Therefore, when $\lambda\nmid L_j(1)$, we can always expect to eliminate $L_j(1)$ quickly.
Hence, after reasonably many rounds, the candidates that survive Step 2 are precisely the 
ones for which $\lambda\mid L_j(1)$ and $\lambda'\mid L_j(-1)$. The following lemma shows us 
that in the worst case, Step 2 eventually yields at most $270$ candidates. 
\fi

\begin{lemma}\label{lem:step2-candidates}
    Let $(a_1,a_{2,i}, a_{3,i})_{i\in\set{1,\ldots,k}}$ be the triples from Step 2. For each $i$, 
    write $L_i(T) := L_{p,a_1, a_{2,i}, a_{3,i}}(T)$.
    Then there are at most $270$ candidates $i$ for which $\lambda\mid L_i(1)$.
   %When $p<2^{50}\cdot 5^{22}$, there are at most $270$ candidates $i$ for which $\lambda\mid L_i(1)$.
    %When $p>2^{50}\cdot 5^{22}$, there are at most $6$ candidates $i$ for which $\lambda\mid L_i(1)$ and ${\lambda'} \mid {L_i(-1)}$.
\end{lemma}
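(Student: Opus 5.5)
The plan is to combine a lower bound on the exponent $\lambda$ with the arithmetic-progression structure of the candidate orders coming out of Step 1. For a fixed value $a_{2,i}$ (there are at most $6$ by the lemma in Subsection~\ref{section-step1}), the candidate orders are
\[L_{\min}+tp,\qquad 0\le t\le T,\qquad T\le 40\sqrt{p}.\]
I will count the $t$ with $\lambda\mid L_{\min}+tp$, bound this count by $45$, and multiply by $6$ to get $270$.

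\textbf{Lower bound on $\lambda$.} $J(C)(\F_p)$ is a finite abelian group of rank at most $2g=6$, so $\#J(C)(\F_p)$ divides $\lambda^6$. By Hasse--Weil, $\#J(C)(\F_p)\ge(\sqrt p-1)^6$. Together these give $\lambda\ge\sqrt p-1$.

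\textbf{Case $p\nmid\lambda$.} The congruence $L_{\min}+tp\equiv 0\pmod\lambda$ then determines $t$ uniquely modulo $\lambda$. So the number of admissible $t$ in $[0,T]$ is at most
\[\frac{T}{\lambda}+1\le\frac{40\sqrt p}{\sqrt p-1}+1.\]
This is at most $45$ once $p>144$, since $\sqrt p\ge 12$ gives $40\cdot 12/11+1<45$. Summing over the at most $6$ values of $a_2$ gives at most $270$.

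\textbf{Case $p\mid\lambda$ (the main obstacle).} Write $\lambda=p^a m$ with $p\nmid m$. Solutions in $t$ now only exist if $p\mid L_{\min}$, and then $t$ is determined modulo $\lambda/p=p^{a-1}m$. The count is therefore at most $40\sqrt p\,/(p^{a-1}m)+1$, and I need $p^{a-1}m$ to be of size roughly $\sqrt p$.

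To get this, I would use that the $p$-rank of $J(C)$ is at most $g=3$ (as in the proof of Lemma~\ref{lem:p3-edge}). The $p$-primary part of the group then divides $p^{3a}$ and the prime-to-$p$ part divides $m^6$, so $(\sqrt p-1)^6\le p^{3a}m^6$.
\begin{itemize}
\item If $a\ge 2$, then $p^{a-1}\ge p\ge\sqrt p$ and we are done.
\item If $a=1$, the inequality only gives $m\gtrsim 1$. This is genuinely degenerate: for $J(C)(\F_p)\cong(\Z/p\Z)^3$ every candidate is $\equiv 0\pmod p$, so the pure Step 2 count does not bound it.
\end{itemize}
The subcase $a=1$ is where I would have to appeal to the structure forced by Hasse--Weil. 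The size $\#J\approx p^3$ combined with $p$-part dividing $p^3$ makes $m$ essentially $1$ and forces $L_p(1)=p^3$, $a_1=0$. This is exactly the situation of Lemma~\ref{lem:p3-edge}, and it is handled by the second outer loop of Algorithm~\ref{alg:BSGS} using the trace zero variety.

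So I expect the proof either to restrict to $p\nmid\lambda$ or to use that exceptional case to discard it. I would check which reading the author intends and, if necessary, state the bound as holding for the candidates surviving both outer loops.
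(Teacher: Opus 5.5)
\paragraph{Main case.} Your case $p\nmid\lambda$ is essentially the paper's whole proof. The paper takes two candidates in the same Hasse sub-interval (using $p>1600$ so the six intervals are disjoint), gets $\lambda\mid p\delta_3$ with $|\delta_3|\le 20\sqrt p$, and counts multiples of $\lambda\ge 2^{-1/6}\sqrt p$. Your per-$a_2$ congruence count with $\lambda\ge\sqrt p-1$ is the same argument with cleaner bookkeeping: it is explicit for $p>144$ and needs no disjointness. Your subcase $p^2\mid\lambda$ is also correct. The paper disposes of $p\mid\lambda$ with the single sentence that it ``forces $\lambda=0$'', which is not an argument, so you have correctly located the weak point.

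\paragraph{The gap.} The gap is in your subcase $a=1$.
\begin{itemize}
\item Your claim that $\#J\approx p^3$ together with ``$p$-part divides $p^3$'' makes $m$ essentially $1$ confuses \emph{divides} with \emph{equals}. Write the $p$-part of $G=J(C)(\F_p)$ as $(\Z/p\Z)^e$ with $1\le e\le 3$.
\item Only $e=3$ forces $\#G=p^3$, since that is the unique multiple of $p^3$ in the Hasse interval once $p>66$. Hence $G\cong(\Z/p\Z)^3$ and $m=1$.
\item For $e=1,2$ the prime-to-$p$ part has order at least $(\sqrt p-1)^6/p^e$ and rank at most $6$. This yields only $m\ge(\sqrt p-1)p^{-e/6}$, so your count per $a_2$ value is bounded only by about $40p^{e/6}$. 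Neither Lemma~\ref{lem:p3-edge} nor the trace-zero loop says anything about these groups.
\item In the case $e=3$, which is exactly the situation of Lemma~\ref{lem:p3-edge}, the bound cannot hold at all. Every Step~1 candidate has $L_i(1)\equiv\#G\equiv 0\pmod p$, so all of the roughly $40\sqrt p$ (or more) candidates are divisible by $\lambda=p$. That is far more than $270$ for $p>144$.
\end{itemize}

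\paragraph{The proposed rescue is circular.} Deferring to ``candidates surviving both outer loops'' does not work. The second loop of Algorithm~\ref{alg:BSGS} is entered only after the first loop has achieved $\#\mathrm{CandidateTriples}\le 270$, which is exactly what this lemma is supposed to guarantee. Moreover, that loop tests $L(-1)$ on the trace zero variety rather than $\lambda\mid L_i(1)$.

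What your argument (and the paper's) actually proves is the lemma under the explicit hypothesis $p\nmid\lambda$ or $p^2\mid\lambda$. The remaining case $p\parallel\lambda$ needs either that hypothesis added to the statement or a genuinely new idea; it cannot be derived from Hasse--Weil and the $p$-rank bound alone.
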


\begin{proof}
    Recall that the $L_i(T)$s must lie in one of the following six intervals (we assume $p>1600$ so that these intervals are disjoint):
        \[[B+ kp(p+1)-20p^{3/2}, B+ kp(p+1)+20p^{3/2}],\]
        where $0\leq k\leq 5$ and $B=(p^3+1)+a_1(p^2+1)+ a_{2,\min}(p+1)$, $a_{2,\min}$
        is the smallest candidate for $a_2$ from Step 1.
        If all intervals contain one element, then 
        there are at most $6$ candidates. Otherwise, two contenders lie in the same interval. Let them be $(a_1,a_{2,i},a_{3,i})$
        and $(a_1,a_{2,j},a_{3,j})=(a_1, a_{2,i}+\delta_2p, a_{3,i}+\delta_3p)$, yielding $L_{p,i}(T)$ and $L_{p,j}(T)$, respectively. We have $\lambda\mid L_{p,i}(1)-L_{p,j}(1)=p(\delta_2(p+1)+\delta_3)$, but since 
        they are in the same interval, $\delta_2=0$ hence $\lambda\mid p\cdot \delta_3$. 
        
        Since for sufficiently large $p$, we have $\frac{1}{2}p^3\leq L_p(1) \leq \lambda^6$,
        hence $\lambda\geq \frac{1}{2^{1/6}}\sqrt{p}$. If $p\mid \lambda$, then this forces $\lambda=0$ when $p$ is sufficiently big. So we 
        assume $p\nmid \lambda$ hence $\lambda\mid \delta_3$.

        As $\delta_3\in [-20\sqrt{p}, 20\sqrt{p}]$, is a multiple of $\lambda$, there are at most $\frac{40\sqrt{p}}{\frac{1}{2^{1/6}}\sqrt{p}}\sim 45$
         possibilities of $\delta_3$ in this interval. Since there are at most $6$ intervals,
        there are at most $270$ possibilities of $(a_2,a_3)$ pair candidates. 
\iffalse
 Analogously, $\lambda\leq L_{p,i}(-1)-L_{p,j}(-1)=p(m_2(p+1)-m_3)$ yields $\lambda'\mid m_3$. 

        When $p>2^{50}\cdot 5^{22}=K$, we can reduce this to $6$ candidates.

        Write $J(C)(\F_p)\simeq \Z/n_1\Z\times\ldots\times\Z/n_6\Z$,  and $J(C)(\F_{p^2})/ J(C)(\F_p)\simeq \Z/n_1'\Z\times\ldots\times\Z/n_6'\Z$,
        where $n_i\mid n_{i+1}$ and $n_i'\mid n_{i+1}'$ for $i\in\set{1,\ldots,5}$. Note that $n_6=\lambda$ and $n_6'=\lambda'$.

        For \tr{large enough p}, we have \[\frac12 p^3\leq L_p(1) = n_1\ldots n_6\leq n_1\lambda^5,\]
        but since $\lambda\mid m_3$ and $\abs{m_3}\leq 20\sqrt{p}$, we have $n_1\geq \frac{p^3}{2\lambda^5}\geq \frac{1}{2^{11}5^5}\sqrt{p}.$
        Write $c := \frac{1}{2^{11}5^5}$.

        Similarly, $n_1'\geq c\sqrt{p}$. But since $n_1\mid \lambda\mid \abs{m_3}$ and $n_1'\mid\lambda'\mid \abs{m_3}$, we have $\lcm(n_1,n_1')\mid \abs{m_3}<20\sqrt{p}$.
        Now, $\gcd(n_1,n_1')\lcm(n_1,n_1') = n_1n_1' > c^2 p$. So  $\gcd(n_1,n_1') > c^2p/\lcm(n_1,n_1') > \frac{c^2}{20}\sqrt{p}$.

        When $p>2^{50}\cdot 5^{22}=K$, $\gcd(n_1,n_1')>2$. This implies that there exists an integer $\ell>2$,
        such that both $J(C)(\F_p)[\ell]$ and $J(C)(\F_{p^2})/ J(C)(\F_p)[\ell]$ has full rank. This is a contradiction, 
        \tr{reference}. Therefore, for $p>K$, at most $6$ candidates satisfy this condition.
\fi
\end{proof}

%Remark: when running the algorithm, we have never seen a case where the number of candidates is greater than $6$ after step 2.

\begin{remark}
In this section, complexity bounds refer to group operations in the Jacobian, not bit operations.  The complexity of computations that do not involve group operations is negligible.
\end{remark}

Step 1 of our lifting algorithm involves no group operations (see Section~\ref{section-step1}).
By Lemma~\ref{lem:expected-runtime-step3} below, the expected runtime of Step 3 is 
$O(\sqrt{p})$ group operations. It remains only to consider the complexity of Step 2 (Algorithm~\ref{alg:BSGS}). Recall that we 
expect to find a point of order $\lambda$ in $O(1)$ steps; see \cite[Theorem 8.1]{Sut07}.
When $\lambda$ divides only the correct group order $L_p(1)$,
the expected number of iterations for the first outer loop is $O(1)$. When $\lambda$ 
divides multiple candidate group orders, we are forced to repeat 
the first outer loop (the baby-step giant-step algorithm that takes $O(p^{1/4})$ group operations) for at least $O(\log p)$ iterations. However,
we are expected to meet the other loop condition \verb|#Candidates <= 270| in $O(1)$ iterations 
by Lemma~\ref{lem:step2-candidates}. Hence the expected
runtime of the first outer loop of Step 2 is $O(\log p \cdot p^{1/4})$.

By Lemma~\ref{lem:p3-edge},
the expected number of iterations of the second outer loop is $O(1)$ and by Lemma~\ref{lem:step2-candidates}
we run this for at most $270$ candidates. Hence the second outer loop amounts to $O(1)$ group operations.

Therefore, the expected runtime for Step 2 is $O(\log p \cdot p^{1/4})$ group operations, 
which is still dominated by Step 3. To sum up, the expected runtime for the entire algorithm is bounded by $O(\sqrt{p})$ group operations.

\begin{lemma}\label{lem:expected-runtime-step3}
    The expected cost of Step 3 is bounded by $O(\sqrt{p})$ group operations.
\end{lemma}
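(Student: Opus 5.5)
The approach is to bound separately the number of comparisons that Algorithm~\ref{alg:sylow-eliminator} performs and the cost of each comparison.

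\emph{Number of comparisons.} By Lemma~\ref{lem:step2-candidates}, and the loop condition of the first outer loop of Algorithm~\ref{alg:BSGS}, at most $270$ candidates enter Step 3. Hence there are $O(1)$ pairs $(i,j)$ to compare. By the correctness argument of Subsection~\ref{Subsection-LV-correctness}, each wrong $L_j$ is eliminated after an expected $O(1)$ rounds. It therefore suffices to show that one call to the Sylow-verifier, including generating the $8$ random Sylow elements, costs $O(\sqrt{p})$ group operations in expectation.

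\emph{Cost of producing the inputs.} For a pair $(i,j)$ we pick $\ell,b$ with $\ell^{b+1}\mid L_i(1)$ and $\ell^b\parallel L_j(1)$. A random element of the $\ell$-Sylow subgroup is obtained by multiplying a random point by $L_1(1)/\ell^{v_\ell}$. Here we use any candidate order, since the true order is among them and the $\ell$-part is what matters; if needed we use $\operatorname{lcm}$ of the candidates. This costs $O(\log p)$ group operations per element, so $O(\log p)$ in total.

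\emph{Cost of the verifier.} By Remark~\ref{rem:sylow-verifier}, the verifier costs $O(\ell^{(r-1)/2})$, where $r\le 6$ is the rank of the $\ell$-Sylow subgroup. This is the main obstacle: we need $\ell^{(r-1)/2}\ll\sqrt{p}$, and it must hold uniformly. I would split into cases according to $r$ and $\ell$.

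\emph{Case $r\le 2$.} The cost is at most $O(\ell^{1/2})$. The Sylow discrete-log cost is really governed by the size of the subgroup being resolved, not by $\ell$ alone. Since $\ell$ divides $L_i(1)-L_j(1)=p(\delta_2(p+1)+\delta_3)$, which is $O(p^2)$ with the $p$-part handled separately, I would argue $\ell^b\le O(p^{1})$. The relevant BSGS then costs $O(\sqrt{p})$.

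\emph{Case $r\ge 3$.} The group contains $(\mathbb{Z}/\ell)^r$, so $\ell^{r}\le \#J(C)(\F_p)=O(p^3)$. Moreover, the Weil pairing and Frobenius constraints give $\ell\mid p-1$ once $r>3$. I would combine $\ell^{r}\le O(p^3)$ with the fact that $\ell$ divides the difference $\delta_2(p+1)+\delta_3=O(p)$ to bound $\ell^{(r-1)/2}$ by $O(p^{1/2})$. The worst case $r=6$ forces $\ell^6\le O(p^3)$, so $\ell=O(\sqrt p)$ and $\ell^{5/2}$ is too large. Here I would instead refine the verifier's cost to $O(\ell^{(r'-1)/2})$, where $r'$ is the number of generators actually needed to exceed $\ell^{b}$, since the verifier stops as soon as the product of orders reaches $\ell^{b+1}$. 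One should check that this refinement suffices.

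\emph{Conclusion.} Summing over $O(1)$ pairs and $O(1)$ expected rounds gives an expected $O(\sqrt{p})$ group operations.
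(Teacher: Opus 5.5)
Your outer structure matches the paper: at most $270$ candidates, $O(1)$ pairs, and $O(1)$ expected rounds, so it suffices to bound one verifier call. The refinement you raise at the end is in fact the paper's key step. The subgroup to be certified has order $\ell^{b+1}$, so its rank is at most $\min(b+1,6)$, and the verifier costs $O(\ell^{b/2})$.

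For $\ell\neq p$ you already have what is needed, but you never combine it. The prime power $\ell^b$ divides the nonzero integer $\delta_2(p+1)+\delta_3=O(p)$, so $\ell^{b/2}=O(\sqrt p)$. State this directly rather than ending with ``one should check.'' Both of your rank cases actually rest on this refinement. If $r$ is the rank of the full $\ell$-Sylow subgroup, even $r=2$, $b=0$ allows $\ell\approx p^{3/2}$, giving an $O(\ell^{1/2})$ bound of $p^{3/4}$. So the case split and the Weil-pairing remark can be dropped.

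\textbf{The gap.} You set the case $\ell=p$ aside (``with the $p$-part handled separately'') but never treat it, and it cannot be handled by the verifier bound at all. Here divisibility only gives $p^{b-1}\mid \delta_2(p+1)+\delta_3$, hence $b\le 2$. The subcases $b=0$ and $b=1$ are fine, costing $O(1)$ and $O(\sqrt p)$ respectively. But $b=2$ means $p^3\mid L_i(1)$ (so $L_i(1)=p^3$) and $p^2\parallel L_j(1)$. If $J(C)(\F_p)\cong(\Z/p\Z)^3$, the verifier must resolve a rank-$3$ subgroup of order $p^3$ at cost $\Theta(p)$, not $O(\sqrt p)$.

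The paper excludes this configuration before Step 3 begins. The second outer loop of Algorithm~\ref{alg:BSGS} tests candidates on the trace zero variety $J(C)(\F_{p^2})/J(C)(\F_p)$ until no pair with $L_1=p^3$ and $p^2\mid L_2$ survives. Lemma~\ref{lem:p3-edge} shows this loop terminates after $O(1)$ expected iterations. Without invoking this Step 2 mechanism, your argument does not give $O(\sqrt p)$.
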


\begin{proof}
Consider the Sylow-verifier (see Remark~\ref{rem:sylow-verifier}). If there exists a subgroup of 
size $\ell^e$, then we are expected to find it in $O(1)$ many function calls to the subroutine.
No matter if the subgroup exists or not, the dominating procedure 
is to call the Sylow-verifier. 
Now, Algorithm~\ref{alg:sylow-eliminator} calls the Sylow-verifier subroutine at most $270^2$ times per round, 
and it is expected to terminate in $O(1)$ rounds.
Therefore, the total expected runtime is $O(1)$ multiplied by the expected runtime of the Sylow-verifier. We 
claim that this is at worst $O(\sqrt{p})$ group operations.

Let $a_1,a_2,a_3$ denote the correct triple (yielding $L_1(T)$),
and $a_1', a_2', a_3'$ denote the incorrect triple (yielding $L_j(T)$), 
where $a_1'=a_1, a_2'=a_2+p\delta_2$, and $a_3'=a_3+p\delta_3$.

Recall that in our context, we have
 $\ell^{e}\mid L_1(1)$, $\ell^{e-1}\mid L_j(1)$ so $\ell^{e-1}\mid L_1(1)-L_j(1) = p(\delta_2(p+1)+\delta_3)$. Either $\ell\neq p$ or $\ell=p$.

\underline{Case $\ell\neq p$:}
 Now, since $p\neq \ell$, we have $\ell^{e-1}\mid \delta_2(p+1)+\delta_3\in O(p)$ as $\delta_2\in O(1), \delta_3\in O(\sqrt{p})$.
 Therefore, $\ell\in O(p^{\frac{1}{e-1}})$.

 Let $r$ be the rank of a size $\ell^{e}$ subgroup of $G$.
 As seen in Remark~\ref{rem:sylow-verifier}, the runtime to prove the existence of a rank $r$
 subgroup of $G$ is at most $O(\ell^{(r-1)/2})$ group operations. Since $r\leq \min(e, 6)$,
  this takes at most $O(\ell^{(e-1)/2})\in  O(\sqrt{p})$ group operations.

 \underline{Case $\ell=p$:}
If $e=1$, it takes $O(\log p)$ time to check the existence of a subgroup of order $p$. If $e=2$, takes $O(\sqrt{p})$ to check, as described in Remark~\ref{rem:sylow-verifier}.
If $e=3$, then we must have $p^3\mid L_1(1)$ and $p^2\mid\mid L_j(1)$. The loop condition in the second outer loop of
Algorithm~\ref{alg:BSGS} prevented this situation. 
Summing up the cases, we conclude that the expected number of group operations is $O(\sqrt{p})$. 
\end{proof}

\subsection{Complexity analysis - average time analysis} \label{Subsection-LV-expected-time}
The $O(\sqrt{p})$ bound on the expected running time of our algorithm reflects a worst case scenario.  But in most cases the algorithm is much faster than this. In this section we give a heuristic argument that, when averaged over all possible inputs, the running time is $O(p^{1/4})$ group operations.  In Section~\ref{Subsection-step3examples} we provide empirical evidence to support our heuristic claim, which can also be seen in Table \ref{Table-exp1}.

Throughout this subsection, the constants implicit in the asymptotic notations $O(\cdot), \Omega(\cdot)$, and $\Theta(\cdot)$ 
depend only on the fact that we are working with the Jacobian of a genus $3$ curve. In particular,
they are independent of $p$ and the specific group $G=\Jac(C)(\F_p)$, and can be explicitly computed.

We first show that the inputs where Step 3 is needed must follow a particular pattern. Namely, 
the group order $\abs{G}$ must be divisible by an integer in $\Omega(p^2)$ of specific types.

\begin{lemma} \label{lem:step3-pattern}
    Let $(a_1,a_2,a_3)$ and $(a_1,a_2',a_3')$ be two candidates where 
    both $L_{p,a_1,a_2,a_3}(1)$ and $L_{p,a_1,a_2',a_3'}(1)$ are divisible by $\lambda$. Then one of the following holds:
    \begin{enumerate}
        \item If $p\nmid\abs{G}$, then $\abs{G}$ is divisible by a perfect square $m_1\in\Omega(p^2)$.
        \item If $p\mid \abs{G}$ but $p^2\nmid \abs{G}$, then $\abs{G}/p$ is divisible by a perfect square $m_2\in \Omega(p)$.
        \item $p^2\mid \abs{G}$. In this case, we let $m_3=p^2$.
    \end{enumerate}
\end{lemma}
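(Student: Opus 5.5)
The plan is to combine the divisibility argument from the proof of Lemma~\ref{lem:step2-candidates} with the structure theorem for $G=\Jac(C)(\F_p)$. One of the two candidates is the true order $\abs{G}$, and $\lambda\mid\abs{G}$ always holds. Write the difference of the two candidate orders as $p(\delta_2(p+1)+\delta_3)$, where $\abs{\delta_2}\le 5$ and $\abs{\delta_3}\le 40\sqrt p$. Both candidates are divisible by $\lambda$, so $\lambda$ divides this difference. Since the candidates are distinct, $d:=\delta_2(p+1)+\delta_3$ is a nonzero integer with $\abs{d}=O(p)$.

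Next I would bound $\lambda$ from above in terms of $p$. Write $G\simeq \Z/n_1\times\cdots\times\Z/n_6$ with $n_1\mid\cdots\mid n_6=\lambda$. Then $\lambda\mid pd$, and the argument splits according to the $p$-part.
\begin{itemize}
\item If $p\nmid\abs G$, then $p\nmid\lambda$, so $\lambda\mid d$ and $\lambda=O(p)$.
\item If $p\,\Vert\,\abs G$, put $\lambda_0=\lambda/p$ (or $\lambda_0=\lambda$ if $p\nmid\lambda$). Then $\lambda_0\mid d$, so $\lambda_0=O(p)$.
\item The case $p^2\mid\abs G$ is conclusion~(3) by definition, so nothing needs to be proved there.
\end{itemize}

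The key step is to extract a large square from the fact that $\abs G=\prod n_i\ge \tfrac12 p^3$ (by Hasse--Weil, for $p$ large) while every $n_i\le\lambda=O(p)$. I would set $m_1=n_5^2$ and aim to show $n_5=\Omega(p)$. The cleaner route is to pair the factors: $n_1n_2\mid n_2^2$, $n_3n_4\mid n_4^2$, and $n_5n_6$ is divisible by $n_5^2$. Hence $n_5^2\mid\abs G$. For the size bound, $\tfrac12p^3\le\abs G\le n_5^5\lambda\le n_5^5\cdot O(p)$, so $n_5^5=\Omega(p^2)$. That only gives $n_5=\Omega(p^{2/5})$, which is too weak.

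So the estimate needs to be sharper. The crude bound fails because it does not use that $\Jac(C)(\F_p)$ has rank at most $6$ in a structured way. One could instead use the Weil pairing: $n_1n_6$, $n_2n_5$ and $n_3n_4$ behave symplectically, and for $\ell\ne p$ the relevant constraint is that $G[\ell^\infty]$ embeds in $(\Z/\ell^\infty)^6$. That alone is still not enough.

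The fix I would actually commit to is to take the square to be $m_1=(\abs G/\lambda)$-related. Since $\lambda\mid \abs G$ and $\lambda\mid d$, I would use $s:=\gcd(\abs G,\abs G')$ and aim to show $\abs G/\lambda$ has square part $\Omega(p^2)$. Concretely, $\abs G=\lambda\cdot n_1\cdots n_5$ with $n_1\cdots n_5\ge \abs G/\lambda=\Omega(p^2)$. Every prime $q$ dividing $n_1\cdots n_5$ also divides $\lambda$, and each $q$-part of $n_i$ is at most the $q$-part of $\lambda$. The square-like object is then $\prod_q q^{2\lfloor v_q/2\rfloor}$ taken over the multiset of exponents.

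Pairing $n_1,\dots,n_5,n_6$ into three pairs $(n_1n_2)(n_3n_4)(n_5n_6)$ gives $n_2^2n_4^2n_6^2\ge\abs G$. The square $(n_2n_4n_5)^2$ divides $\abs G$ because $n_1\le n_2$, $n_3\le n_4$ and $n_5\mid n_6$. Its size is handled by
\[
(n_2n_4n_5)^2\ \ge\ \frac{n_1n_2n_3n_4n_5\cdot n_2n_4n_5}{\ }\ \ge\ \frac{\abs G}{\lambda}\cdot n_2n_4n_5 .
\]
This is $\Omega(p^2)$ since $\abs G/\lambda=\Omega(p^2)$.

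In case (2) I would repeat the argument with $G$ replaced by its prime-to-$p$ part, whose order is $\abs G/p=\Omega(p^2)$ and whose exponent is $O(p)$. This yields a square $m_2$ dividing $\abs G/p$ with $m_2\ge(\abs G/p)/\lambda_0=\Omega(p)$.

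I expect the main obstacle to be this size bound on the square divisor, namely making sure the pairing argument really yields $\Omega(p^2)$ rather than a weaker power. Along the way I also need to check the boundary issue that $d\neq0$, which uses $p>1600$ so that distinct candidates give distinct orders.
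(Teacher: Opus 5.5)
\textbf{The committed step fails.} The claim that $(n_2n_4n_5)^2$ divides $\abs{G}=n_1\cdots n_6$ is false in general. After cancelling, $(n_2n_4n_5)^2\mid\abs{G}$ is equivalent to $n_2n_4n_5\mid n_1n_3n_6$. The relations $n_1\mid n_2$ and $n_3\mid n_4$ point the wrong way for this. For example, with invariant factors $1,q,q,q,q,q$ you get $\abs{G}=q^5$ but $(n_2n_4n_5)^2=q^6$.

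Choosing $n_2$ and $n_4$, the larger members of their pairs, makes the size bound easy. It is true that $(n_2n_4n_5)^2\ge n_1n_2n_3n_4n_5\cdot n_5\ge\abs{G}/\lambda$, though your displayed chain is garbled: its first inequality would need $n_1n_3\le 1$. But this choice destroys divisibility. As written, neither Case~1 nor Case~2 (which reuses the same pairing) produces a square dividing $\abs{G}$ (resp.\ $\abs{G}/p$). The rest of your setup is fine: $\lambda\mid pd$ with $d\neq 0$, $\abs{d}=O(p)$, and hence $\lambda=O(p)$ (resp.\ $\lambda/p=O(p)$).

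\textbf{Repair within your framework.} Pair from the other side. The square $m_1=(n_1n_3n_5)^2$ divides $\abs{G}$, since $n_1^2\mid n_1n_2$, $n_3^2\mid n_3n_4$ and $n_5^2\mid n_5n_6$. Moreover
\[
\frac{\abs{G}}{m_1}=\frac{n_2}{n_3}\cdot\frac{n_4}{n_5}\cdot\frac{n_6}{n_1}\le n_6=\lambda,
\]
so $m_1\ge\abs{G}/\lambda=\Omega(p^2)$ once $\lambda=O(p)$. In Case~2, apply the same bound to the prime-to-$p$ part (order $\abs{G}/p$, exponent $\lambda/p$). This gives a square $m_2\mid\abs{G}/p$ with $m_2\ge(\abs{G}/p)/(\lambda/p)=\Omega(p)$.

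\textbf{The paper's route.} The paper avoids invariant factors entirely, using the idea you wrote down and then dropped. Take $m_1=\prod_q q^{2\lfloor v_q(\abs{G})/2\rfloor}$, the largest square divisor of $\abs{G}$. Then $\abs{G}/m_1$ is squarefree, so it divides $\mathrm{rad}(\abs{G})$. That radical divides $\lambda$, because every prime dividing $\abs{G}$ divides the exponent. This gives $m_1\ge\abs{G}/\lambda$ in one line, and the same argument applied to $\abs{G}/p$ handles Case~2.
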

\begin{proof}
Let $a_2=a_2'+p\delta_2$ and $a_3=a_3'+p\delta_3$. Then $\delta_2\in O(1)$ and $\delta_3\in O(\sqrt{p})$. 
So $\lambda\mid p(\delta_2(p+1)-\delta_3)\in O(p^2)$.
    
\begin{enumerate}
    \item Suppose $p\nmid\abs{G}$. Then $\gcd(\lambda, p)=1$ and $\lambda\mid \delta_2(p+1)-\delta_3$ implies $\lambda \in O(p)$.
    Write $\abs{G} = p_1^{e_1}\ldots p_k^{e_k}$. Since $p_1\ldots p_k\mid \lambda$, 
    we know that $p_1\ldots p_k\in O(p)$.

    Consider the perfect square $m_1= \prod_{i=1}^{k}p_i^{2\floor{e_i/2}}$. Since $\abs{G}\in \Theta(p^3)$, 
    we have $\abs{G}/m_1\leq p_1\ldots p_k \in O(p)$,
    Therefore, $m_1\in\Omega(p^2)$ and $m_1\mid \#(G)$.

    \item Suppose $p\mid\abs{G}$ but $p^2\nmid \abs{G}$. We can write 
     $\abs{G}=p\cdot p_1^{e_1}\ldots p_k^{e_k}$. Since $p_1\ldots p_k\mid \lambda/p$ 
     and $\lambda\in O(p^2)$, we have $p_1\ldots p_k\in O(p)$.

     Let $n_2 =  \prod_{i=1}^{k}p_i^{\floor{e_i/2}}$ and set $m_2 = p\cdot n_2^2$.
     Then $\abs{G}/(p n_2^2)\leq p_1\ldots p_k \in O(p)$. This implies $m_2=pn_2^2\in\Omega(p^2)$, $m_2\mid \abs{G}$
     and $n_2\in \Omega(\sqrt{p})$.
    \item Suppose $p^2\mid\abs{G}$. We let $m_3=p^2$. \qedhere
    \end{enumerate}
\end{proof}

From the lemma, when $\lambda$ is a factor of more than one candidate group order, $\abs{G}$ must be divisible by some $m\in\Omega(p^2)$, where 
either $m$ is a perfect square (Case~1), or $m=pn^2$ for some $n^2\in\Omega(p)$ (Case~2), or $m=p^2$ (Case~3).
We next compute the expected number of group orders in the Hasse interval $[(\sqrt{p}-1)^6, (\sqrt{p}+1)^6]$ (with width $O(p^{5/2})$) that are divisible 
by an $m$ that arises in any of the three ways. 

For a heuristic argument, we assume that for a particular $m\in\Omega(p^2)$, 
the probability of a group order being divisible by $m$ is $O\left(\frac{1}{m}\right)$.
We analyze the three cases 
from Lemma~\ref{lem:step3-pattern}.
\begin{enumerate}
    \item In the first case, although the Hasse interval has 
width $O(p^{5/2})$, the knowledge of $\abs{G}\bmod p$ gives us $O(p^{3/2})$ choices for $\abs{G}$.
Since $m_1$ is a perfect square and $m_1\in O(p^3)$, we have $\sqrt{m_1}\in O(p^{3/2})$. So there are at most $O(p^{3/2})$ choices 
for $m_1$. For each choice of $m_1$, since $m_1\in\Omega(p^2)$, the probability of a certain group size being its multiple is at most $\frac{1}{p^2}$.
Therefore, the expected number of group sizes being a multiple of some $m_1$ is at most $p^{3/2}\cdot p^{3/2}\frac{1}{p^2}=O(p)$.

\item In the second case, since $n_2\in O(p)$, there are at most $O(p)$ choices of $n_2$. We fix $n_2$. 
Among the integers in the interval of width $O(p^{5/2})$, only $O(p^{3/2})$ are divisible by $p$.
On average, since $n_2^2\in \Omega(p)$ and it must divide the group order, at most $O(\sqrt{p})$ group orders in 
this range are valid. 
This yields $O(p^{3/2})$ possible group orders on average.
\item In the third case, there are $O(\sqrt{p})$ multiples of $p^2$ within 
an interval of width $O(p^{5/2})$.

\end{enumerate}
In summary, at most $O(p^{3/2})$ possible group orders out of $O(p^{5/2})$ can be an integer multiple of a number
of the form $m_1, m_2$ or $m_3$. Thus, the probability of $\lambda$ being divisible by a number in $\Omega(p^2)$ is bounded above by $\frac{1}{p}$.

Recall that whenever an instance needs Step 3, 
it has performed on average $O(\log(p) p^{1/4})$ runtime in Step 2 
and on average $O(\sqrt{p})$ runtime in Step 3. This makes the total runtime $O(\sqrt{p})$. 

In summary, Step 3 could be required when $\lambda$ is divisible by a number in $\Omega(p^2)$ satisfying specific conditions.
The probability of this is at most $\frac{1}{p}$ as shown above.
On the other hand, when $\lambda$ is not divisible by a number of the above form, since we repeat the loop $\log p$ many times and the probability
of drawing an element of order $\lambda$ is at least $\frac12$,
 with probability
at most $\frac{1}{p}$, Step 3 is needed. Otherwise, 
the average runtime is $O(p^{1/4})$. 
Thus the total average runtime is 
\[\frac{1}{p}O(\sqrt{p})+ \left(1-\frac{1}{p}\right)\left[\left(1-\frac 1p\right)O(p^{1/4}) + \frac{1}{p}O(\sqrt{p})\right]=O(p^{1/4}).\]

\subsection{Step 3 examples} \label{Subsection-step3examples}

\begin{remark}\label{experiment-step3examples}
    With an implementation of Step 1 and 2 in Magma \cite{MAGMA} and access to Sutherland's database \cite{Sut19}, we investigated how often Step 3 is needed. 
    For primes between $1600$ and $2048$, among approximately $3,712,000$ random instances of curve-good reduction prime pairs, 
    only $180$ required Step 3 (roughly one in every $20,622$ instances).
    For primes between $2048$ and $4096$, among approximately $3,264,000$ random instances of curve-good reduction prime pairs, 
    only $291$ required Step 3 (roughly one in every $11,216$ instances).
\end{remark}
From performing Experiment~\ref{experiment-step3examples} and a few manual searches, we collected a list of $422$ distinct examples where Step 3 was needed, which all follow a particular form:
\begin{itemize}
    \item One candidate is always of the form $L_E(T)^3$,
where $L_E(T)$ is a degree $2$ polynomial of the form $c_2T^2 - c_1T + 1, c_1,c_2\in\Z$. 
This suggests that $L_E(T)$ is the $L$-polynomial of an elliptic curve $E$ over $\F_p$ 
and $\Jac(C)$ is isogenous to $E\times E \times E$.
    \item All the other candidates are of the form $L_E(T)P(T)$,
where $P(T)$ is a degree $4$ polynomial with $P(T)\neq L_E(T)^2$.
\end{itemize}
For example, when $p=1697$ and $f =  x^4 + 3x^2y^2 + 2y^4 + 2x^3z + 3x^2yz + 3xy^2z + 4y^3z + 4x^2z^2 + 3xyz^2 + 7y^2z^2 + 3xz^3 + 5yz^3 + 3z^4$,
we have $a_1=126$ and the list of candidates is:
\[\begin{cases}
    (a_2,a_3)=(6989, 359184)&\Rightarrow L_{p}(T) = (pT^2 + 42T + 1)(p^2T^4 + 142548T^3 + 1764T^2 + 84T + 1)\\
    (a_2,a_3)= (8686, 430458)&\Rightarrow L_{p}(T) = (pT^2 + 42T + 1)(p^2T^4 + 142548T^3 + 3461T^2 + 84T + 1)\\
    (a_2,a_3)=(10383, 501732 )&\Rightarrow L_p(T)= (pT^2 + 42T + 1)^3
\end{cases}\]

All the candidates for the $L$-polynomials have a common factor $L_E(T) := (pT^2 + 42T + 1)$ 
where $L_E(1) = 1740$. It is very likely that the group exponent of $J(C)(\F_p)$ is a factor of $1740$. In this case, 
$1740$ annihilates all random points on $J(C)(\F_p)$.
%, so all the candidates annihilate all points on the Jacobian. 
Hence Step 2 could not distinguish any of these three candidates.

Now, if the correct $L$-polynomial is the first or the second candidate, then we are very likely to generate 
a point whose order is not a factor of $1740$ and would have eliminated the other two.
 Therefore, the probability of the first or the second candidate being correct is extremely small.

A very natural strategy is to work with the trace zero variety $J(C)(\F_{p^k})/J(C)(\F_p)$ for $k=2,3,4,\ldots$: 
The group order of the trace zero variety for the candidate $L_{p,i}(T)$ is $T_{i,k} := \prod_{\zeta^k=1, \zeta\neq 1} (L_{p,i}(\zeta))$.
We could generate a random point $D\in J(C)(\F_{p^k})$ and eliminate all $L_{p,i}$ such that $T_{i,k}\cdot D \notin J(C)(\F_p)$.
However, in the most likely case that the third candidate is correct, the above strategy will not eliminate any further candidates.
The group order of the trace zero variety is $\left(\prod_{\zeta^k=1,\zeta\neq 1} L_{E}(\zeta)\right)^3$, and 
the group exponent will be a factor of $\left(\prod_{\zeta^k=1,\zeta\neq 1} L_{E}(\zeta)\right)$, which is also
a factor of all the other candidate group orders of the trace zero variety.

Hence, Step 3 is needed to find the correct candidate with proof, though the probability 
of the first two candidates being correct is extremely small. 
We emphasize that in our experiments, all $422$ examples requiring Step 3 share this pattern.
\section{Implementation and comparisons}\label{section-implementation}

Implementations of the hybrid group operation (Section~\ref{section-group-hybrid}) and 
the naive group operation (Section~\ref{section-group-naive}) in Magma \cite{MAGMA} (version V2.29-4) are available 
on \cite{Shi26}. Implementation of the $L$-polynomial lifting algorithm (Section~\ref{section-lifting})
is also on \cite{Shi26}. Step 3 was not implemented as it is rarely needed.
We use a C implementation of the algorithm described 
in \cite{CHS23} as a subprocess to compute the $L$-polynomial 
modulo~$p$. 

The following experiments compare our timings with a C++ implementation of Costa's algorithm \cite{Cos15, CHK19}\footnotemark
and a Magma implementation of Tuitman's algorithm \cite{Tui14, Tui16}.
\subsection{Experiment 1}
Table~\ref{Table-exp1} displays the average total time to compute the $L$-polynomial 
for a single curve at all odd primes of good reduction up to $2^n$. 

To benchmark \cite{CHS23} and our work, we fixed $10$ randomly generated curves 
from Sutherland's database \cite{Sut19} and measured the CPU time to perform all $L$-polynomial 
computations for odd primes of good reduction up to $2^n$. For each $n$, we first computed 
$L_p(T)\bmod p$ for all odd primes of good reduction up to $2^n$ using the average polynomial-time algorithm in \cite{CHS23}. For primes $p<128$, we used 
a point counting algorithm to compute the full $L$-polynomial. For primes $128<p<500$, we used 
a combination of point counting and our lifting algorithm. For $p>500$, we only used our lifting algorithm. We then removed the $2$ fastest and 
the $2$ slowest entries, and averaged the remaining $6$ entries. We parallelized this computation 
using $10$ cores. 

To benchmark Tuitman's algorithm \cite{Tui14, Tui16} and Costa's algorithm \cite{Cos15, CHK19}, for each $n\geq 7$, we generated $10$
random $(C, p)$ pairs ($5$ pairs for $n\geq 21$) where $C$ is a smooth plane quartic over $\Q$ (from \cite{Sut19}) and $p\in [2^{n-1}, 2^{n})$ is an odd prime
of good reduction. We then removed the $20\%$ fastest and $20\%$ slowest entries, took the average,
multiplied it by the estimated number of odd primes of good reduction in this interval, and summed the results accordingly. We note that for $n=22$, 
only one instance of Tuitman's algorithm completed successfully.

\footnotetext{Note that Costa's algorithm is also capable of computing only $L_p(T)$ mod $p$ instead of the full $L$-polynomial, though  
experiments show that \cite{CHS23} is faster.}

\begin{table}[!ht]
    \centering\footnotesize
    $\begin{array}{|r|r|r|r|r|r|}
        \hline
        n & \text{Compute all }& \text{Lift all }L_p(T) & \text{Compute all }  & \text{Compute all }  &  \text{Compute all}\\
          &  L_p(T) \bmod \ p& \bmod \ p  &  L_p(T) \text{(\cite{CHS23}}  & L_p(T)\text{ \cite{Cos15, CHK19}\footnotemark} &L_p(T)\text{ \cite{Tui14, Tui16}}  \\ 
          &\text{ \cite{CHS23}} &\text{ (our work)} &+ \text{ our work)} & \text{(estimated)} & \text{(estimated)}\\
          \hline
            11 & 0.408 & 54.7\phantom{00} & 55.2\phantom{00} & 34.7& 1930 \\
            12 & 0.655 & 95\phantom{.000} & 95.7\phantom{00} & 108\phantom{.0} & 6600 \\
            13 & 1.28\phantom{0} & 157\phantom{.000} & 158\phantom{.000} &  360\phantom{.0}& 28200 \\
            14 & 2.96\phantom{0} & 291\phantom{.000} & 294\phantom{.000} & 1140\phantom{.0} & 113000 \\
            15 & 7.44\phantom{0} & 531\phantom{.000} & 539\phantom{.000} & 4220\phantom{.0} & 451000 \\
            16 & 16.4\phantom{00} & 1050\phantom{.000} & 1070\phantom{.000} & 15100\phantom{.0} & 2490000 \\
            17 & 35.2\phantom{00} & 2150\phantom{.000} & 2180\phantom{.000} & 53800\phantom{.0} & 12700000 \\
            18 & 84.3\phantom{00} & 4510\phantom{.000} & 4590\phantom{.000} & 188000\phantom{.0} & 54600000 \\
            19 & 171\phantom{.000} & 9700\phantom{.000} & 9870\phantom{.000} & 812000\phantom{.0} & 269000000 \\
            20 & 427\phantom{.000} & 20900\phantom{.000} & 21300\phantom{.000} & 3010000\phantom{.0} & 1660000000 \\
            21 & 1030\phantom{.000} & 45600\phantom{.000} & 46600\phantom{.000} & 12300000\phantom{.0} & 11000000000 \\
            22 & 2420\phantom{.000} & 93500\phantom{.000} & 95900\phantom{.000} & 245000000\phantom{.0} & 49200000000 \\
          \hline
        \end{array}$ 
        \bigskip
\caption{Average computation time (in 2.000 GHz AMD EPYC 7713 core-seconds) for computing the $L$-polynomials of a fixed curve
for all odd primes of good reduction up to $2^{n}$.
}\label{Table-exp1}
\end{table}

\footnotetext{The jump from $n=21$ to $n=22$ is due to the fact that 
for $p>2^{22}$, elements of $\F_{p^3}$ need two machine words to be stored. See page 29 of \cite{Cos15}.}

\subsection{Experiment 2}
Table~\ref{Table-exp2} displays the average CPU time 
to obtain the $L$-polynomial for a single curve over a single odd
 prime of good reduction in the interval $[2^{n-1}, 2^{n})$. 

To obtain these results, for each $n$, we selected a random curve from Sutherland's database \cite{Sut19}
and a random odd prime of good reduction in the interval $[2^{n-1}, 2^{n})$, and computed the $L$-polynomial. 
We used the $O(p^{1/2+o(1)})$ time algorithm in \cite{CHS23} to compute $L_p(T)\bmod p$ and used 
our work to lift $L_p(T)\bmod p$.
For the combined implementation of \cite{CHS23} and our work, we repeated this experiment $25$ times. For Costa's algorithm and Tuitman's algorithm, 
we repeated this experiment $10$ times for $n\leq 20$ and $5$ times for $n\geq 21$. For $n=22$, only 
one instance of Tuitman's algorithm completed successfully.
We then removed the $20\%$ fastest and $20\%$ slowest timings and computed the average of the remaining data.

\begin{table}[!ht] 
    \centering\footnotesize
    $\begin{array}{|r|r|r|r|r|r|}
        \hline
      n   & \text{Compute }       & \text{Lift }            &  \text{Compute } L_p(T) & \text{Compute } L_p(T)    & \text{Compute } L_p(T) \\
          & L_p(T)  \bmod \ p       &  L_p(T)  \bmod \ p   &\text{(\cite{CHS23}}      &  \text{\cite{Cos15, CHK19}\footnotemark}& \text{\cite{Tui14, Tui16}} \\ 
          &\text{ (\cite{CHS23})}& \text{ (our work)}      &+ \text{ our work)}       &                               & \\
          \hline
12 & 0.029 & 0.172 & 0.201 & 0.288  & 18.4\phantom{00} \\
13 & 0.031 & 0.159 & 0.190 & 0.545 & 46.7\phantom{00} \\
14 & 0.035 & 0.183 & 0.219 & 0.897 & 97.5\phantom{00} \\
15 & 0.036 & 0.195 & 0.231 & 1.91\phantom{0} & 210\phantom{.000} \\
16 & 0.044 & 0.219 & 0.263 & 3.58\phantom{0} & 672\phantom{.000} \\
17 & 0.045 & 0.253 & 0.299 & 6.79\phantom{0} & 1780\phantom{.000} \\
18 & 0.054 & 0.265 & 0.319 & 12.5\phantom{00} & 3910\phantom{.000} \\
19 & 0.080 & 0.294 & 0.374 & 30.6\phantom{00} & 10500\phantom{.000} \\
20 & 0.101 & 0.328 & 0.429 & 56.9\phantom{00} & 36100\phantom{.000} \\
21 & 0.148 & 0.395 & 0.543 & 126\phantom{.000} & 127000\phantom{.000} \\
22 & 0.215 & 0.443 & 0.658 & 1660\phantom{.000} & 272000\phantom{.000} \\
23 & 0.350 & 0.507 & 0.857 & 8750\phantom{.000} & - \\
24 & 0.503 & 0.609 & 1.11\phantom{0} & 20900\phantom{.000} & - \\
25 & 0.897 & 0.709 & 1.61\phantom{0} & 41700\phantom{.000} & - \\
26 & 1.42\phantom{0} & 0.866 & 2.28\phantom{0}  & - & - \\
27 & 2.35\phantom{0} & 1.07\phantom{0} & 3.42\phantom{0} & - & - \\
28 & 3.57\phantom{0} & 1.28\phantom{0} & 4.85\phantom{0} & - & - \\
29 & 6.37\phantom{0} & 1.5\phantom{00} & 7.87\phantom{0} & - & - \\
30 & 9.79\phantom{0} & 1.78\phantom{0} & 11.6\phantom{00} & - & - \\
31 & 15.7\phantom{00} & 3.12\phantom{0} & 18.8\phantom{00} & - & - \\
32 & 20.7\phantom{00} & 3.9\phantom{00} & 24.6\phantom{00} & - & - \\

          \hline
    \end{array}$
    \bigskip
\caption{Average computation time (in 2.000 GHz AMD EPYC 7713 core-seconds) for computing the $L$-polynomial 
of a random genus $3$ smooth plane quartic with a random odd prime of good reduction, $p\in [2^{n-1}, 2^{n})$
}\label{Table-exp2}
\end{table}
\footnotetext{See above comment}

\newpage
    \printbibliography[
        heading=bibintoc
        ]
\iffalse
\textbf{Algorithm templates}

\begin{algorithm}
    \caption{template}\label{alg:lift}
    \begin{algorithmic}
    \State \textbf{Input:} 
    \State \textbf{Output:} 
    \State 

    \If{condition}
        \State Do stuff
    \ElsIf{More things}
        \State Do stuff 
    \Else 
        \State things
    \EndIf

    \State things 
    \State things

    \State stuff
    \While{condition}: 
        Do things
    \EndWhile
    \State 
    \State \Return return
    \end{algorithmic}
    \end{algorithm}
\fi

\end{document}